\documentclass[a4paper,reqno]{amsart}
\usepackage{amssymb}

\theoremstyle{plain}
\newtheorem{thm}{Theorem}
\newtheorem{lem}{Lemma}
\newtheorem{prop}{Proposition}
\newtheorem{cor}{Corollary}
\theoremstyle{definition}
\newtheorem{defn}{Definition}
\newtheorem{exmp}{Example}

\newcommand\Comp{\mathop{\fam 0 Comp}\nolimits}
\newcommand\idd{\mathop{\fam 0 id}\nolimits}
\newcommand\ann{\mathop{\fam 0 ann}\nolimits}
\newcommand\Spann{\mathop{\fam 0 Span}\nolimits}
\newcommand\oo[1]{\mathrel{{}_{#1}}}
\newcommand\Hom{\mathop {\fam 0 Hom}\nolimits}
\newcommand\End{\mathop {\fam 0 End}\nolimits}
\newcommand\Cend{\mathop {\fam 0 Cend}\nolimits}
\newcommand\Diff{\mathop {\fam 0 Diff}\nolimits}
\def\Vecc{\mathrm{Vec}}
\def\Alg{\mathrm{Alg}}

\title[On irreducible algebras of conformal
endomorphisms]{On irreducible algebras of conformal
endomorphisms over a linear algebraic group}

\author{P.~S.~Kolesnikov}
\address{Sobolev Institute of Mathematics, Novosibirsk, Russia}
\email{pavelsk@math.nsc.ru}

\thanks{Partially supported by RFBR (project 05--01--00230).
The author gratefully acknowledges the support of the Pierre Deligne fund
based on his 2004 Balzan prize in mathematics.}

\subjclass{16S50, 20G99}

\begin{document}

\begin{abstract}
We study the algebra of conformal endomorphisms
$\Cend^{G,G}_n$ of a finitely generated free module
$M_n$ over the coordinate Hopf algebra $H$ of a linear algebraic
group~$G$.
It is shown that a conformal subalgebra of
$\Cend_n$ acting irreducibly on $M_n$
generates an essential left ideal of $\Cend^{G,G}_n$
if enriched with operators of multiplication on elements of $H$.
In particular, we describe such subalgebras for the case when $G$ is finite.
\end{abstract}

\maketitle

\subsection*{Introduction}

The notion of a conformal algebra was introduced in
\cite{K1}
as a tool for investigation of vertex algebras \cite{Bor, FLM}.
From the formal point of view, a conformal algebra is a linear space
$C$
over a field
$\Bbbk$ ($\mathrm{char}\,\Bbbk =0$)
endowed with a linear operator
$T:C\to C$ and with a family of bilinear operations
$(\cdot\oo{n}\cdot)$, $n\in \mathbb Z_+$
(where $\mathbb Z_+$ stands for the set of non-negative integers),
satisfying the following axioms:
\begin{itemize}
\item[(C1)]
 for every $a,b\in C$ there exists $N\in \mathbb Z_+$
such that $(a\oo{n} b)=0$ for all $n\ge N$;
\item[(C2)]
 $(Ta\oo{n} b) = T(a\oo{n} b) - (a\oo{n} Tb) =
 \begin{cases} -n(a\oo{n-1}b), & n>0 \\ 0,& n=0\end{cases}$
\end{itemize}
One of the most natural examples of a conformal algebra
is the Weyl conformal algebra
$\mathcal W = \Bbbk[T]^{\otimes 2}\simeq \Bbbk[T,v]$,
where the operations
$(\cdot\oo{n}\cdot)$ are defined as follows:
$$
T^{(r)} f(v)\oo{n} T^{(s)} h(v) = \sum\limits_{t\ge 0}
   (-1)^r\binom{n}{r}\binom{n-r}{t} T^{(s-t)} f(v)\partial^{n-r-t}h(v),
$$
$r,s\in \mathbb Z_+$, $f,h\in \Bbbk[v]$.
Hereinafter, $T^{(m)}=T^m/m!$ (it is suitable to suppose $T^{(m)}=0$
for $m<0$),
$\partial = d/dv$ is the ordinary derivation with respect to
the variable~$v$.

The collection of operators
$W =\{(a\oo{n}\cdot)\in \End \mathcal W : a\in \mathcal W,\, n\in \mathbb Z_+\}$
is a subalgebra of the algebra of linear transformations  of the space
$\mathcal W$, moreover,
$W\simeq A_1=\Bbbk\langle x ,d\mid dx-xd=1\rangle$,
i.e., $W$ is isomorphic to the first Weyl algebra
(this is a reason for the name of the conformal algebra $\mathcal W$).

The canonical representation of
 $A_1$ on the space of polynomials gives rise to the natural action
of the conformal algebra $\mathcal W$ on $\Bbbk[T]$.
Namely, the family of operations
$(\cdot\oo{n}\cdot): \mathcal W\otimes \Bbbk[T]\to \Bbbk[T]$,
$n\in \mathbb Z_+$,
given by
$$
 T^{(r)}f(v)\oo{n} T^{(s)} = (-1)^r\binom{n}{r}T^{(s+r-n)}f(T),
  \quad f\in \Bbbk[v],\ r,s\ge 0,
$$
satisfies the conditions similar to
(C1) and~(C2).

For an integer $N\ge 1$,
the set
$\mathbb M_n(\mathcal W)$
of all matrices of size $N$ over $\mathcal W$
is also a conformal algebra
(denoted by $\Cend_N$ \cite{K1, K3})
which acts on the space of columns
$M_N=\Bbbk[T]\otimes \Bbbk^N$
by the ordinary matrix multiplication rule.

In \cite{K1}, V.~Kac stated the problem to
describe {\em irreducible\/} conformal subalgebras
 $C\subseteq \Cend_N$, i.e.,
such that there are no non-trivial
$\Bbbk[T]$-submodules of $M_N$ invariant with respect to the
operators $(a\oo{n}\cdot)$,
$a\in C$, $n\in \mathbb Z_+$.
In \cite{BKL}, such a description was obtained for
$N=1$
and also for the case when
 $C$ is a finitely generated module over $\Bbbk[T]$.
In the same paper, a conjecture on the structure of irreducible subalgebras
of $\Cend_n$ was stated. The conjecture was proved in~\cite{Ko1}.

The classification of irreducible conformal subalgebras of
$\Cend_N$
leads to a description of a class of ``good''
subalgebras of the matrix Weyl algebra
$\mathbb M_N(A_1)$
acting irreducibly on
$M_N$
(see \cite{Ko4}).
Note that the problem of description of all such subalgebras of
$\mathbb M_N(A_1)$ remains open even for $N=1$.

In the present paper, we consider the notion of a conformal algebra
over a linear algebraic group $G$. The class of such algebras includes
ordinary algebras over a field
(for $G=\{e\}$)
and conformal algebras (for $G=\mathbb A^1\simeq (\Bbbk, +)$).
On the other hand, the notion of a conformal algebra over
$G$ is equivalent to the notion of a pseudo-algebra \cite{BDK} over $H$,
where $H=\Bbbk[G]$ is the Hopf algebra of regular functions
(coordinate algebra) on~$G$.

The analogue of
$\Cend_N$ in the class of conformal algebras over $G$
can be considered as a collection of transformation rules
(satisfying certain conditions)
of the space of vector-valued regular functions
$u: G\to \Bbbk^N$
by means of elements of the group~$G$.
For example, the left-shift transformation
$L: \gamma \mapsto L_\gamma $,
where $(L_\gamma u)(x)=u(\gamma x)$, $\gamma, x\in G$,
belongs to~$\Cend_N$.

We will prove a generalization of the main result of
\cite{Ko1}
for the case of an arbitrary linear algebraic group $G$.
For $G=\{e\}$ this would be the classical Burnside Theorem,
and if
$G=\mathbb A^1$ then it turns out to be crucial point of proof
of the conjecture from \cite{BKL}.
We will apply the result obtained in the case of ``intermediate'' complexity
to describe irreducible conformal subalgebras over a finite group.

\subsection{Multicategories and operads}
In this section, we state some notions related to operads
 \cite{GK} and multicategories
(also known as pseudo-tensor categories \cite{BD}).

An ordered $n$-tuple of integers
$\pi=(m_1,\dots ,m_n)$, $m_i\ge 1$,
is said to be an $n$-{\em partition\/} of $m$
if  $m_1+\dots + m_n = m$.
The set of all such partitions is denoted by
$\Pi(m,n)$.
Each partition
$\pi\in \Pi(m,n)$
defines a bijective correspondence between the sets
$\{1,\dots, m\}$
and
$\{(i,j)\mid i=1,\dots, n, \, j=1,\dots, m_i\}$,
namely,
\[
 (i,j) \leftrightarrow (i,j)^\pi := m_1+\dots + m_{i-1} + j.
\]
For two partitions
\[
\tau=(p_1,\dots ,p_m)\in \Pi(p,m),
\quad
\pi = (m_1,\dots ,m_n)\in \Pi(m,n),
\]
define
$\tau\pi \in \Pi(p,n)$ in the following way:
\[
\tau\pi = (p_1+\dots+p_{m_1}, p_{m_1+1}+\dots + p_{m_1+m_2},
  \dots, p_{m-m_n+1} +\dots+p_m).
\]
If
$\tau\pi = (q_1,\dots, q_n)$
then for every
$i=1,\dots, n$
let
$\tau\pi_i$
denotes the subpartition
$(p_{m_1+\dots+m_{i-1}+1}, \dots, p_{m_1+\dots+m_i}) \in \Pi(q_i,m_i)$.

Suppose $\mathcal A$ is a class of objects
such that for every integer
$n\ge 1$
and for every family
 $A_1,\dots, A_n, A\in \mathcal A$
there exists a linear space
$P^{\mathcal A}_n(A_1,\dots, A_n; A)=P_n(\{A_i\}; A)$
over a fixed field $\Bbbk $.

Also, suppose that for every
$A_1,\dots ,A_m\in \mathcal A$,
$B_1,\dots ,B_n\in \mathcal A$,
$C\in \mathcal A$
and for every
$n$-partition
$\pi =(m_1,\dots ,m_n)$ of~$m$
there exists a linear map
\begin{equation}\label{CH2-Comp}
\Comp^\pi :
P_n(\{B_i\}; C)
 \otimes  \bigotimes\limits_{i=1}^n
P_{m_i}(\{A_{(i,j)^\pi }\}; B_i)
\to P_m(A_1,\dots A_m; C).
\end{equation}
To shorten the notation, we will denote
$ \Comp^\pi (\varphi ,\psi_1,\dots, \psi_n)$
by
$\Comp^\pi (\varphi ,\{\psi_i\})$
or by
$\varphi (\psi_1,\dots ,\psi _n)$,
if the structure of a partition
$\pi $
is clear.

The elements of the spaces
$P_n(\{A_i\}; B)$ are called
{\em multimorphisms} (or $n$-{\em mor\-phisms}),
the family of maps $\Comp^\pi $,
$\pi \in \Pi(m,n)$, $m,n\ge 1$,
is called a {\em composition rule}.

A class $\mathcal A$ endowed with spaces of multimorphisms
and a composition rule is called a
{\em multicategory\/} if the following axioms hold.

(A1)
The composition rule is associative.
The latter means that, given a collection of objects
$A_h, B_j, C_i \in \mathcal A$
($h=1,\dots ,p$, $j=1,\dots ,m$, $i=1,\dots ,n$),
two partitions
$\tau=(p_1,\dots ,p_m)\in \Pi(p,m)$,
$\pi = (m_1,\dots ,m_n)\in \Pi(m,n)$,
an object
 $D\in \mathcal A$,
an $n$-morphism
$\varphi \in P_n(\{C_i\}; D)$,
and two collections of multimorphisms
\[
 \psi_j\in P_{p_j}(\{A_{(j,t)^\tau }\}; B_j),\ j=1,\dots, m,
\quad
\chi_i \in P_{m_i}(\{B_{(i,t)^\pi }\}; C_i),\ i=1,\dots, n,
\]
we have
\begin{equation}   \nonumber
\Comp^\tau \big(\Comp^\pi (\varphi ,\{\chi_i\}),
 \{\psi_j\} \big)
=
\Comp^{\pi\tau}\big(\varphi ,
\big\{\Comp^{\tau_i} (\chi_i, \{\psi_{(i,t)}\} )
    \big\} \big),
\end{equation}
where $\tau_i = (p_{i1},\dots ,p_{im_i})$ are subpartitions of~$\tau$.

(A2)
For every
$A\in \mathcal A$
there exists an ``identity'' 1-morphism
$\idd_A \in P_1(A; A)$
such that
\begin{equation}  \nonumber
f(\idd_{A_1},\dots , \idd_{A_n}) =
\idd_A(f) = f
\end{equation}
for all $f\in P_n(\{A_i\}; A)$, $n\ge 1$.

Let
$\mathcal A$ and $\mathcal B$ be two multicategories.
A {\em functor\/} from $\mathcal A$ to $\mathcal B$
is a rule
$F$ which maps an object
$A\in \mathcal A$ to $F(A)\in \mathcal B$ in such a way that
for every
$\varphi \in P^{\mathcal A}_n(A_1,\dots, A_n; A)$
there exists
$F(\varphi )\in P^{\mathcal B}_n(F(A_1),\dots, F(A_n); F(A))$,
where
\[
\begin{gathered}
F ( \Comp^\pi (\varphi ,\{\psi_i \} ))
= \Comp^\pi (F(\varphi ), \{F(\psi _i)\} );
\\
F(\idd_A) = \idd_{F(A)},
\end{gathered}
\]
and the map
$\varphi\mapsto F(\varphi )$ is linear.

One of the most natural examples of a multicategory is  the class
 $\Vecc_{\Bbbk}$
of linear spaces over a field~$\Bbbk $
with respect to
\[
P_n^{\Vecc_\Bbbk}(A_1,\dots ,A_n;A) = \Hom (A_1\otimes \dots \otimes A_n, A),
\]
where the rule
$\Comp $ is the ordinary composition of multilinear maps.

The multicategory
$\Vecc_\Bbbk $ can be considered as a particular case
of multicategory $\mathcal M^*(H) $ of left unital modules
over a (coassociative) bialgebra $H$ (see \cite{BDK} for details).

Let us consider one more example of a multicategory.
For every integer  $n\ge 1$ denote by $\Alg(n)$
the linear space spanned by all binary trees with
$n$ leaves. Such a tree can be naturally identified with a bracketing
on the word
$x_1\dots x_n$ over the alphabet $X=\{x_1,x_2,\dots \}$.
The composition of binary trees can be expressed as
\eqref{CH2-Comp} by means of the composition of words as follows:
\begin{multline}\label{CH2-comp_Alg}
 \Comp^\pi (u, v_1,\dots, v_n) \\
=
u(v_1(x_{(1,1)^\pi}, \dots , x_{(1,m_1)^\pi}),
\dots ,
v_n(x_{(n,1)^\pi}, \dots , x_{(n,m_n)^\pi})
)
\end{multline}
where
$\pi \in \Pi(m,n)$,
$u=u(x_1,\dots, x_n)\in \Alg(n)$,
$v_i=v_i(x_1,\dots, x_{m_i})\in \Alg(m_i)$,
$i=1,\dots, n$.

The class $\Alg $ which consists of a single object
endowed with multimorphisms
$P_n = \Alg(n)$ and the composition rule
\eqref{CH2-comp_Alg}
is a multicategory.
A multicategory with a single object is known as
{\em operad}.

\begin{defn}[see, e.g., \cite{GK}]\label{def:alg}
An {\em algebra\/} in a multicategory $\mathcal A$
is a functor $F$ from the operad $\Alg $ to $\mathcal A$.
\end{defn}

Every functor $F$ from $\Alg $ to $\mathcal A$
is completely defined by an object
$A \in \mathcal A$ and by a 2-morphism
$\mu =F(x_1x_2)\in P_2^{\mathcal A}(A,A;A)$.
If $\mathcal A=\Vecc_\Bbbk$ then
$\mu $ is an ordinary product; if $\mathcal A=\mathcal M^*(H)$
then  $\mu $ is called a {\em pseudo-product}.
Algebras in the multicategory $\mathcal M^*(H)$ are known as {\em pseudo-algebras\/}
over $H$,  or $H$-{\em pseudo-algebras} \cite{BDK}.

An algebra $F$ in the sense of Definition \ref{def:alg}
is said to be {\em associative\/} if
\[
F(x_1(x_2x_3)) = F((x_1x_2)x_3) .
\]
In order to define what is, for example, a commutative or Lie algebra,
one needs the notion of a symmetric multicategory.
In the present paper we consider associative algebras only, so
we do not need a symmetric structure.

\subsection{Conformal algebras over a linear algebraic group}
Consider a linear algebraic group
$G$
and the corresponding coordinate Hopf algebra
$H=\Bbbk[G]$
with coproduct
$\Delta $, counit $\varepsilon $, and antipode~$S$.
We will use the short Sweedler notation as follows:
\[
\begin{gathered}
\Delta(h)=h_{(1)}\otimes h_{(2)},
\quad
(\Delta\otimes \idd_H)\Delta(h)=
(\idd_H\otimes \Delta)\Delta(h)= h_{(1)}\otimes h_{(2)}\otimes h_{(3)},
\\
(S\otimes \idd_H)\Delta(h) = h_{(-1)}\otimes h_{(2)},
\quad
(\idd_H\otimes S)\Delta(h) = h_{(1)}\otimes h_{(-2)},
\end{gathered}
\]
and so on.

Denote by $L_g$, $g\in G$,
the operator of left shift on
$H$, i.e.,
$L_gh = h_{(1)}(g)h_{(2)}$, $h\in H$.
It is clear that
$L_{g_1}L_{g_2} = L_{g_2g_1}$.

Introduce the following multicategory structure on the class
of all left unital modules over the algebra $H$.
Let $M_1,\dots, M_n, M$ be such modules,
and let $ P_n(M_1,\dots, M_n; M)$ stands for the space of all functions
\[
a: G^{n-1}\to \Hom_\Bbbk( M_1\otimes \dots  \otimes M_n, M)
\]
such that
\begin{itemize}
\item
     for every $u_i\in M_i$, $i=1,\dots, n$, the map
$G^{n-1} \to M$ defined by
$x\mapsto a(x)(u_1,\dots, u_n)$
is a regular
$M$-valued function on $G^{n-1}$;
\item
for every $u_i\in M_i$, $f_i\in H$, $i=1,\dots, n$,
$g_1,\dots, g_{n-1}\in G$
we have
\[
\begin{split}
&a(g_1,\dots, g_{n-1}) (f_1u_1, \dots, f_nu_n) \\
&\quad
=f_1\big(g_1^{-1}\big)\dots f_{n-1}\big(g_{n-1}^{-1}\big)
(L_{g_1}\dots L_{g_{n-1}}f_n ) a(g_1,\dots, g_{n-1})(u_1,\dots, u_n).
\end{split}
\]
\end{itemize}

Define the following composition rule on the spaces mentioned above.
Suppose we are given
$\pi =(m_1,\dots, m_n)\in \Pi(m,n)$,
$\psi_i\in P_{m_i}(\{N_{(i,j)^\pi}\}; M_i)$,
$i=1,\dots, n$,
$\varphi \in P_n(\{M_i\}; M)$.
Consider a family of
$g_j\in G$, $j=1,\dots, m-1$,
and put
$\gamma_i = g_{(i,m_i)^\pi}\dots g_{(i,1)^\pi}\in G$
for
$i=1,\dots, n-1$,
$\bar g_i =(g_{(i,1)^\pi}\dots g_{(i,m_i-1)^\pi})\in G^{m_i-1}$
for
$i=1,\dots, n$
(if $m_i=1$ then $\bar g_i$ is void),
$\bar \gamma =(\gamma _1,\dots, \gamma_{n-1})\in G^{n-1}$.
Now, define
\begin{equation}
\Comp^\pi (\varphi, \psi_1,\dots, \psi_n)(g_1,\dots , g_{m-1})
=
\Comp^\pi (\varphi(\bar \gamma), \psi_1(\bar g_1),\dots, \psi_n(\bar g_n)),
                            \label{eq:G-comp}
\end{equation}
where $\Comp $ in the right-hand side
means the composition rule of the multicategory
$\Vecc_\Bbbk $.

Let us denote the multicategory constructed above by
$\mathcal M^*(H)$,
since it is not difficult to note that this is a particular
case of the multicategory from \cite{BDK}.

\begin{defn}\label{defn:Gconf}
A {\em conformal algebra\/} over a linear algebraic group
$G$ is an algebra in the multicategory
$\mathcal M^*(H)$, $H=\Bbbk[G]$.
\end{defn}

In the language of ``ordinary'' algebraic operations a conformal algebra
over $G$ can be defined as a left $H$-module $C$
endowed with a family of
$\Bbbk $-linear operations
$(\cdot\oo{\gamma}\cdot): C\otimes C \to C$,
$\gamma \in G$, such that  for every
$a,b\in C$, $h\in H$, $g\in G$ we have:
\begin{itemize}
\item[(G1)]  the map
$(a\oo{x}b): \gamma \mapsto (a\oo{\gamma} b)$
is a regular $C$-valued function on~$G$;
\item[(G2)]
$(ha\oo{g}b) = h(g^{-1})(a\oo{g}b)$;
\item[(G3)]
$(a\oo{g}hb)=L_gh(a\oo{g} b)$.
\end{itemize}

A conformal algebra  over a trivial group
$G=\{e\}$ is just an ordinary algebra over the field $\Bbbk $;
if $G=\mathbb A^1$ (the affine line)
then we get the definition of a conformal algebra
\cite{K1} in terms of $\lambda$-brackets;
in the case $G=\mathrm{GL}_1(\Bbbk )$ we obtain
the notion of a $\mathbb Z$-conformal algebra \cite{GKK}.

The associativity property
of a conformal algebra can be easily expressed in terms of
the operations
$(\cdot \oo{g}\cdot )$ by means of computation the images of
$x_1(x_2x_3) = \Comp^{(1,2)}(x_1x_2, x_1, x_1x_2)$
and $(x_1x_2)x_3 = \Comp ^{(2,1)}(x_1x_2, x_1x_2, x_1)$.
According to
\eqref{eq:G-comp}, a conformal algebra $C$ over $G$ is associative
if and only if
\begin{equation}                    \label{eq:conf-assoc}
a\oo{g}(b\oo{\gamma } c) = (a\oo{g} b)\oo{\gamma g} c
\end{equation}
for all $a,b,c\in C$, $g,\gamma \in G$.

\begin{exmp}
Let $A$ be an $H$-comodule algebra, i.e.,  a (not necessarily
associative) algebra equipped by a coaction map
$\Delta_A : A\to H\otimes A$
such that
\begin{itemize}
\item $\Delta_A $ is a homomorphism of algebras;
\item $(\Delta\otimes \idd_A)\Delta_A = (\idd_H\otimes \Delta_A)\Delta_A$.
\end{itemize}
We will use the Sweedler notation for
$\Delta_A$ as well as for~$\Delta $.
Then the free
$H$-module  $C=H\otimes A$ under the operations
\begin{equation}\label{eqGprod-tmp}
(h\otimes a)\oo{\gamma}(f\otimes b) = h(\gamma^{-1})b_{(1)}(\gamma )L_\gamma f
 \otimes ab_{(2)}, \quad \gamma\in G,\ f,h\in H,\ a,b\in A,
\end{equation}
is a conformal algebra over $G$. Moreover, if $A$ is associative
then  $C$ satisfies \eqref{eq:conf-assoc}.
Following \cite{Re1},
denote the conformal algebra $C$ obtained by
$\Diff(A,\Delta_A)$.

Note that an arbitrary algebra $A$ is an $H$-comodule algebra
with respect to the coaction
$\Delta^0_A (a) =1\otimes a$, $a\in A$.
The conformal algebra
$\Diff(A,\Delta_A^0)$ is called the loop algebra or current algebra over $A$,
denoted by $\mathrm{Cur}^H A$.
\end{exmp}

\subsection{Conformal linear maps}

Let
 $M$ and $N$
be linear spaces over the field~$\Bbbk $.
The set $\Hom(M,N)$ is a topological linear space
with respect to the finite topology
(see, e.g., \cite{Jac}).
In that sense, a sequence
 $\{\alpha _k\}_{k\ge 0}\subset \Hom(M,N)$
converges to a map
 $\alpha \in \Hom (M,N)$
if and only if for every finite number of elements
$u_1,\dots, u_n\in M$
there exists a natural $m$ such that
$\alpha _k(u_i)=\alpha (u_i)$, $i=1,\dots ,n$,
for all $k\ge m$.

\begin{defn}\label{defn1}
A map
$a: G \to \Hom (M,N)$
is said to be {\em locally regular\/}
if for every
$u\in M$ the map
$z\mapsto a(z)u$, $z\in G$,
is a regular $N$-valued function on~$G$.
\end{defn}

Let $\{h_i\}_{i\in I}$ be a linear basis of $H$ over $\Bbbk $.
Consider the dual space $H^*$ endowed with a topology defined
by the basic neighborhoods of zero of the form
\[
  X(i_1,\dots ,i_n) = (\Spann_{\Bbbk} \{h_{i_1},\dots ,h_{i_n}\})^\perp
  \subseteq H^*,\quad  i_k\in I,\ k=1,\dots ,n,\ n\ge 0.
\]
It is clear that, up to equivalence, the topology does not depend on the
choice of basis in~$H$.

Suppose $a:G\to \Hom (M,N)$ is a locally regular map.
Then for every
$u\in M$ the function $a(x)u: G\to N$ can be presented as
$a(x) u = \sum\limits_{i\in I} h_i\otimes v_i\in H\otimes N$,
where $a(z)u = \sum\limits_{i\in I}h_i(z)v_i$.
For any $\xi \in H^*$ define
$\tilde a(\xi )\in \Hom(M,N)$ in the following way:
\[
  \tilde a(\xi ): u\mapsto (\langle \xi ,\cdot\rangle \otimes \idd)(a(x) u),
\quad u\in M.
\]
Let us denote by $\tilde a$ the linear map $H^*\to \Hom (M,N)$
which maps $\xi $ to $\tilde a(\xi )$.

\begin{lem}\label{loc-cont}
{\rm 1.}
If $a$ is locally regular then
$\tilde a $ is continuous with respect to the finite topology on
$\Hom(M,N)$.

{\rm 2.} For any continuous linear map
$\alpha :H^*\to \Hom(M,N)$
there exists a locally regular
$a:G\to \Hom(M,N)$
such that
$\tilde a = \alpha $.
\end{lem}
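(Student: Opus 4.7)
The plan is to unpack both continuity statements by working with the basic neighborhoods of zero in each topology, and to reverse-engineer $a$ from $\alpha$ in part 2 by evaluating at Dirac functionals $\delta_z\in H^*$ defined by $\delta_z(h)=h(z)$.

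For part 1, fix a basic neighborhood $U=\{\beta\in\Hom(M,N):\beta(u_k)=0,\ k=1,\dots,n\}$ of zero in the finite topology. Local regularity of $a$ means each $a(x)u_k$ lies in $H\otimes N$, so is a finite sum $\sum_{i\in J_k}h_i\otimes v_i^{(k)}$. Set $J=J_1\cup\dots\cup J_n$, a finite subset of $I$. Then for any $\xi\in X(J)=(\Spann_\Bbbk\{h_i:i\in J\})^\perp$ we have $\tilde a(\xi)(u_k)=\sum_{i\in J_k}\langle\xi,h_i\rangle v_i^{(k)}=0$, so $\tilde a(X(J))\subseteq U$. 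This gives continuity at zero, and hence everywhere by linearity.

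For part 2, put $a(z):=\alpha(\delta_z)$, a map $G\to\Hom(M,N)$ linear in $M$ because $\alpha(\delta_z)$ is. To verify both local regularity and $\tilde a=\alpha$, fix $u\in M$. Continuity of $\alpha$ supplies a finite $J_u\subset I$ with $\alpha(\xi)(u)=0$ whenever $\xi\in X(J_u)$. Hence the linear map $\xi\mapsto\alpha(\xi)(u)$ from $H^*$ to $N$ vanishes on $X(J_u)$ and therefore factors through the restriction $H^*\to(\Spann_\Bbbk\{h_j:j\in J_u\})^*\cong\Bbbk^{J_u}$, $\xi\mapsto(\langle\xi,h_j\rangle)_{j\in J_u}$. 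This factor being a linear map $\Bbbk^{J_u}\to N$, there exist (uniquely determined) $v_j\in N$ with $\alpha(\xi)(u)=\sum_{j\in J_u}\langle\xi,h_j\rangle v_j$. Specializing $\xi=\delta_z$ gives $a(z)u=\sum_{j\in J_u}h_j(z)v_j$, which is a regular $N$-valued function of $z$; and the same identity with general $\xi$ yields $\tilde a(\xi)u=\alpha(\xi)u$, i.e.\ $\tilde a=\alpha$.

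The main issue is conceptual rather than computational: in part 2 one needs to know that a continuous linear map from $H^*$ to $N$ actually arises from an element of $H\otimes N$. This rests on two background facts used tacitly above, namely that the subspaces $X(J)$ for finite $J\subset I$ form a neighborhood base of $0$ (so the topology on $H^*$ is the weak-$*$ topology dual to $H$), and that the evaluation map $\xi\mapsto(\langle\xi,h_j\rangle)_{j\in J_u}$ is surjective onto $\Bbbk^{J_u}$ because $\{h_j\}$ are linearly independent. Once both are in hand, the rest of the argument is essentially forced.
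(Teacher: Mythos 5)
Your proof is correct and follows essentially the same route as the paper: part 1 is the same finite-support argument, and in part 2 your $a(z)=\alpha(\delta_z)$ together with the factorization through $\Bbbk^{J_u}$ reproduces the paper's formula $a(g)u=\sum_i h_i(g)\alpha(\xi_i)u$ built from the dual basis functionals $\xi_i$. The two background facts you flag (that the $X(J)$ form a neighborhood base and that restriction to $\Spann\{h_j\colon j\in J_u\}$ is surjective onto its dual) are indeed all that is tacitly used in the paper as well.
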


\begin{proof}
1. Due to the local regularity of $a$,
for every
 $u_1,\dots ,u_n\in M$
there exist only a finite number of basic
$h_{i_1},\dots ,h_{i_m}\in H$
that appear in the presentations of
$a(x)u_j$, $j=1,\dots ,n$.
Therefore, if $\xi \in X(i_1,\dots ,i_m)$ then
$\tilde a(\xi )u_j =0$ for all $j=1,\dots, n$,
i.e.,
$\tilde a(\xi )\to 0$ as $\xi \to 0$
with respect to the finite topology  on $\Hom (M,N)$.

2. For each $i\in I$ denote by $\xi _i$ the element of $H^*$
defied by the rule
$\langle \xi _i, h_j\rangle =\delta _{i,j}$,
$i,j\in I$.
For a fixed $u\in M$ the set $\{i\in I\mid \alpha(\xi_i)u\ne 0\}$
is finite. Hence, we may consider the map
$a(g)\in \Hom (M,N)$, $g\in G$,
given as follows:
\[
  a(g) u = \sum\limits_{i\in I}h_i(g)\alpha (\xi _i)u.
\]
The function $a(x)u: g\mapsto a(g)u$
is regular, and for the locally regular
$a: G\to \Hom (M,N)$ we have $\tilde a=\alpha $.
\end{proof}

Let
$G_1$, $G_2$ be two linear algebraic groups,
$H_i=\Bbbk[G_i]$, $i=1,2$, be the corresponding Hopf algebras,
then $\Bbbk[G_1\times G_2]= H_1\otimes H_2$.

Suppose we are given a locally regular map
$a : G_1\times G_2\to \Hom(M,N)$.
For every
$\xi \in H_1^*$, $u\in M$
define
\[
\tilde a(\xi ,y)u =
(\langle \xi ,\cdot \rangle \otimes \idd_{H_2}\otimes \idd_N)(a(x,y)u),
\]
where $(x,y)$ is a variable that ranges over $G_1\times G_2$.
We obtain a function
\[
  \tilde a: H_1^*\times G_2\to \Hom(M,N),
\quad
 (\xi , z)\mapsto \tilde a(\xi ,z), \quad \xi \in H_1^*, \ z\in G_2.
\]

\begin{lem}\label{lem:crossprod}
For every $z\in G_2 $ the map
$\xi \mapsto\tilde a(\xi , z)$
is continuous.
For every $\xi \in H_1^*$ the map
$z\mapsto \tilde a(\xi , z)$ is locally regular.
\end{lem}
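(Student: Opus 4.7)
The plan is to reduce everything to a single finite expansion of $a(x,y)u$ in the basis $\{h_i\}_{i\in I}$ of $H_1$, and then read off both statements from this expansion.

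First, fix $u\in M$. Since $a$ is locally regular, the function $(x,y)\mapsto a(x,y)u$ lies in $H_1\otimes H_2\otimes N$, so it can be written as a \emph{finite} sum
\[
 a(x,y)u = \sum_{i\in F(u)} h_i\otimes w_i^{(u)},
\]
where $F(u)\subset I$ is finite and each $w_i^{(u)}\in H_2\otimes N$ is naturally a regular $N$-valued function on $G_2$. By the definition of $\tilde a$ we then have
\[
 \tilde a(\xi,z)u = \sum_{i\in F(u)}\langle\xi,h_i\rangle\, w_i^{(u)}(z),
\qquad \xi\in H_1^*,\ z\in G_2.
\]

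Given the expansion, local regularity in $z$ for a fixed $\xi$ is immediate: the right-hand side is a finite $\Bbbk$-linear combination of the regular $N$-valued functions $w_i^{(u)}$, hence a regular $N$-valued function of $z$, as required by Definition~\ref{defn1}.

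For continuity in $\xi$ at a fixed $z\in G_2$, I would argue exactly as in the proof of Lemma~\ref{loc-cont}(1). Given finitely many vectors $u_1,\dots,u_n\in M$, set $F=F(u_1)\cup\cdots\cup F(u_n)$, which is still a finite subset of $I$. Then for every $\xi$ in the basic neighborhood $X(F)\subset H_1^*$ (the annihilator of $\Spann_\Bbbk\{h_i : i\in F\}$) we have $\langle\xi,h_i\rangle=0$ for all $i\in F$, and therefore $\tilde a(\xi,z)u_k=0$ for each $k=1,\dots,n$. Thus $\tilde a(\xi,z)\to 0$ in the finite topology on $\Hom(M,N)$ as $\xi\to 0$, which gives the desired continuity.

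I do not foresee a real obstacle here: the content of the lemma is essentially bookkeeping once one notices that $a(x,y)u$ lives in a finite-dimensional piece of $H_1\otimes H_2\otimes N$, so that separating the $H_1$-variable via the basis $\{h_i\}$ reduces the two claims to (i) finite linear combinations of regular functions are regular, and (ii) the same annihilator-of-finitely-many-basis-elements argument used in Lemma~\ref{loc-cont}.
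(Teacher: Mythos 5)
Your proof is correct and follows exactly the paper's argument: expand $a(x,y)u$ as a finite sum over a basis of $H_1$ (the paper writes it as $\sum_{i,j}h_i(x)\otimes f_j(y)\otimes v_{ij}$) and read off both claims, with continuity in $\xi$ handled by the same annihilator-of-finitely-many-basis-elements argument as in Lemma~\ref{loc-cont}(1). You merely spell out the details that the paper dismisses as obvious.
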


\begin{proof}
If $u\in M$, $\{h_i\}$ and $\{f_j\}$ are bases of $H_1$ and $H_2$,
respectively, then
$a(x,y)u = \sum\limits_{i,j} h_i(x)\otimes f_j(y)\otimes v_{ij}$,
where the sum is finite.
Both statements obviously follow from this presentation.
\end{proof}

Let $V$ be a $G$-set, i.e., a Zariski closed subset of
an affine space endowed with a continuous action of the group~$G$.
The algebra $A$ of regular functions on $V$
is an $H$-comodule algebra with the coaction $\Delta_A : A \to H\otimes A$
dual to the action of $G$ on~$V$. There exists a natural (right)
representation of $G$ on $A$ by the left-shift automorphisms,
namely,
$L: g \mapsto L_g$, $g\in G$,
 where
$(L_gf)(v) = f(gv)$,  $f\in A$, $v\in V$.

The representation $L$, in particular, has the following properties:
\begin{itemize}
\item
     for any $f\in A$ the map $g\mapsto L_gf$ is an
    $A$-valued regular map on~$G$;
\item
     $L_g(f_1f_2) = L_g(f_1) L_g(f_2)$ for all $f_1,f_2\in A$, $g\in G$.
\end{itemize}

Consider the space of operators that have the same properties as $L$.

\begin{defn}
A {\em conformal linear transformation} ({\em conformal endomorphism})
of an $A$-module $M$ is a locally regular map
$a : G \to \End M$
such that
\begin{equation}\label{TinvCend}
a(g)(fu) = L_gf( a(g)u),\quad  u\in M,\ f\in A,\ g\in G.
\end{equation}
The property \eqref{TinvCend}
is called {\em translation invariance\/}
or T-invariance of the map~$a$.

Denote by $\Cend^{G,V} M$
the space of all conformal endomorphisms
of an $A$-module $M$.
\end{defn}

Introduce the following structure of an
$H$-module on $C=\Cend^{G,V} M$:
\begin{equation}\label{eq:H-mod}
  (fa)(g) = f(g^{-1})a(g),\quad f\in H,\ a\in C,\ g\in G.
\end{equation}
Also, consider the operations
 $(\cdot \oo{g} \cdot )$, $g\in G$,
given by
\begin{equation}\label{eq:G-prod}
 (a \oo{g} b)(z) = a(g)b(zg^{-1}), \quad a,b\in C,\ g,z\in G
\end{equation}
(it is easy to check that $(a\oo{g} b)$
is a locally regular map satisfying \eqref{TinvCend}).

\begin{prop}\label{propCend}
The space
$\Cend^{G,V} M$, where $M$ is a finitely generated $A$-module,
is an associative conformal algebra over $G$
with respect to the operations
\eqref{eq:H-mod}, \eqref{eq:G-prod}.
\end{prop}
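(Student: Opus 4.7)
The plan is to verify, in turn, (i) the $H$-module structure from \eqref{eq:H-mod}, (ii) the well-definedness of $(a \oo{g} b)$ as an element of $C := \Cend^{G,V} M$, (iii) the three axioms (G1)--(G3), and (iv) associativity \eqref{eq:conf-assoc}. Almost everything is a direct unpacking of definitions; only (G1) uses any structural input beyond the Hopf algebra axioms.

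For (i) the check is trivial. For (ii), fix $a,b \in C$ and $g \in G$. Local regularity in $z$ of the map $z \mapsto (a \oo{g} b)(z) = a(g) b(zg^{-1})$ follows from that of $b$, since $a(g)$ is a fixed $\Bbbk$-linear endomorphism. T-invariance is the computation
\[
(a \oo{g} b)(z)(fu) = a(g)\bigl[L_{zg^{-1}} f \cdot b(zg^{-1}) u\bigr] = L_g L_{zg^{-1}} f \cdot a(g) b(zg^{-1}) u = L_z f \cdot (a \oo{g} b)(z) u,
\]
using T-invariance of $a$ and $b$ together with the rule $L_{g_1} L_{g_2} = L_{g_2 g_1}$ already recorded. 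Axioms (G2) and (G3) are equally direct:
\[
(ha \oo{g} b)(z) = (ha)(g) b(zg^{-1}) = h(g^{-1})(a \oo{g} b)(z),
\]
and $(a \oo{g} hb)(z) = h(gz^{-1})(a \oo{g} b)(z) = (L_g h)(z^{-1}) (a \oo{g} b)(z)$, where the last equality uses the Sweedler identity $h_{(1)}(g) h_{(2)}(z^{-1}) = h(g z^{-1})$. Associativity collapses to the equality $a(g) b(\gamma) c(z g^{-1} \gamma^{-1})$ on both sides of \eqref{eq:conf-assoc}: expanding each side by \eqref{eq:G-prod} gives this expression, with no Hopf gymnastics required.

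The only step that needs care is (G1), namely that $\gamma \mapsto (a \oo{\gamma} b)$ is a regular $C$-valued function on $G$. For a fixed $u \in M$ I would expand $b(y) u = \sum_\alpha \phi_\alpha(y) v_\alpha$ (a finite sum), substitute $y = zg^{-1}$, and use the coproduct together with the antipode to rewrite $\phi_\alpha(zg^{-1}) = \phi_{\alpha,(1)}(z)(S\phi_{\alpha,(2)})(g)$ in Sweedler notation; expanding each $a(g) v_\alpha$ by local regularity of $a$ then puts the function $(g,z) \mapsto (a \oo{g} b)(z) u$ into $H \otimes H \otimes M$, placing us exactly in the setting of Lemma~\ref{lem:crossprod}. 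Finite generation of $M$ over $A$ enters here to turn these pointwise (in $u$) decompositions into a single regular $C$-valued function of $\gamma$: choose generators $u_1,\dots,u_n$ of $M$, run the above expansion on each, and invoke T-invariance to extend uniquely to all of $M$. This final assembly step, which combines the Hopf algebraic bookkeeping with the finite generation hypothesis, is where the main technical content of the proposition lies.
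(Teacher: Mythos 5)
Your proposal is correct and follows essentially the same route as the paper: the routine verifications of (G2), (G3), T-invariance and associativity are identical, and for (G1) you likewise reduce to the joint local regularity of $(g,z)\mapsto (a\oo{g}b)(z)u$ on a finite generating set, invoke Lemma~\ref{lem:crossprod}, and use T-invariance plus finite generation to get uniformity. The only difference is that you leave the final reconstruction of $\gamma\mapsto(a\oo{\gamma}b)$ as a regular $C$-valued function as a flagged sketch, whereas the paper executes it by showing $\xi\mapsto\tilde c(\xi,x)$ is continuous into $C$ with the discrete topology and then applying Lemma~\ref{loc-cont}(2).
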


\begin{proof}
Let us check the conditions (G1)--(G3) for the $H$-module $C=\Cend^{G,V} M$.
It follows from \eqref{TinvCend} that
\eqref{eq:H-mod}, \eqref{eq:G-prod}
satisfy (G2) and~(G3).

To check (G1), consider $u\in M$, $a,b\in C$,
$z,g\in G$.
Relation \eqref{eq:G-prod} implies that
the map $c: (g,z)\mapsto (a\oo{g} b)(z)\in \End M$
is a locally regular function on
$G\times G$. Hence, by Lemma~\ref{lem:crossprod},
for every $\xi \in H^*$ the map
$\tilde c(\xi , x ): G\to \End M$, $z\mapsto \tilde c(\xi ,z)$,
locally regular.
It is also clear that $\tilde c(\xi, x)$ is T-invariant.
Therefore, $\tilde c(\xi , x)\in C $.

Suppose $e_1, \dots , e_n\in M$ is a set of generators
of the $H$-module~$M$.
There exist finite collections of elements
$f^1_{ik}, h^1_{ik},
f^2_{ik}, h^2_{ik}\in H$,
$i,k=1,\dots ,n$,
such that
\[
 b(x)e_i = \sum\limits_{k=1}^{n} f^1_{ik}(x)\otimes f^2_{ik}e_k, \quad
 a(x)e_i = \sum\limits_{k=1}^{n} h^1_{ik}(x)\otimes h^2_{ik}e_k.
\]
Then
\begin{multline}   \nonumber
c(x,y)e_i = (a\oo{x} b)(y)e_i = a(x)b(yx^{-1})e_i
=\sum\limits_{k,l=1}^{n} f^1_{ik}(yx^{-1})h^1_{kl}(x)(L_xf^2_{ik})h^2_{kl}e_l
\\
=\sum\limits_{k,l=1}^{n} f^1_{ik(-2)}f^2_{ik(1)}h_{kl}\otimes f^1_{ik(1)}
\otimes f^2_{ik(2)}h^2_{kl}e_l.
\end{multline}
If $\xi \in H^*$ is sufficiently close to zero then
$\tilde c(\xi ,z)e_i=0$ for all
$z\in G$ and for all
$i=1,\dots ,n$.
Therefore, $\tilde c(\xi, x) = 0$, and the function
$\tilde c : H^*\to C$,
$\xi \mapsto \tilde c(\xi , x)\in C$,
is continuous with respect to the discrete topology on~$C$.

Let us fix
$a\in C$, $\xi\in H^*$ and denote by
$\tilde a(\xi)\in \End C$
the linear map that turns $b\in C$
into $ \tilde c(\xi,x)\in C$, where $\tilde c$ is constructed from
$a,b\in C$ as above.
Since $\tilde c$ is continuous with respect to the
discrete topology on $C$,
the map
$\alpha : \xi\mapsto a(\xi )$ is continuous with respect to
the finite topology on $\End C$.
By Lemma \ref{loc-cont}(2),
there exists a locally regular map
$a_1 : G\to \End C$ such that
$\tilde a_1(\xi )=\alpha (\xi)$ for every $\xi \in H^*$.

Note that for all $g,z\in G$, $b\in C$, $u\in M$ we have
\begin{multline}\nonumber
(a_1(g) b)(z)u = \sum\limits_{i\in I} h_i(g)(\alpha(\xi)b)(z)u
=
\sum\limits_{i\in I}h_i(g)(\langle \xi_i,\cdot\rangle\otimes \mathrm{ev}_z \otimes\idd)
 c(x,y)u  \\
 =
 (\mathrm{ev}_g\otimes \mathrm{ev}_z \otimes \idd)c(x,y)u = (a\oo{g} b)(z)u,
\end{multline}
since
$\sum\limits_{i\in I} h_i(g)\langle \xi_i,\cdot \rangle = \mathrm{ev}_g$.
Therefore,
 $a_1(g)b = (a\oo{g} b)$ and the function
$(a\oo{x} b)$ is regular.

Associativity of the conformal algebra
$\Cend^{G,V} M$ follows immediately from the definition
of operations \eqref{eq:G-prod}.
\end{proof}

The most interesting case is when $M$ is a free
$A$-module. Then one may identify $M$
with the space of regular vector-valued functions
on $V$,
and $\Cend^{G,V} M$ is a collection of
transformation rules of this space by means of the group $G$
 (e.g., the left shift $L$ belongs to $\Cend^{G,V} M$).

If $M$ is a free $n$-generated $A$-module then let us denote
$\Cend^{G,V} M$ by $\Cend^{G,V}_n$.
The structure of the conformal algebra
$\Cend^{G,V}_n $ is completely described by the following statement.

\begin{thm}
The conformal algebra $\Cend^{G,V}_n$ is isomorphic to
the conformal algebra
$\Diff(A\otimes \mathbb M_n(\Bbbk), \Delta_A\otimes \idd)$.
\end{thm}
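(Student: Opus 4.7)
The plan is to construct an explicit isomorphism of conformal algebras
\[
 \phi : \Diff(A\otimes \mathbb M_n(\Bbbk), \Delta_A\otimes \idd) \longrightarrow \Cend^{G,V}_n.
\]
The underlying $H$-module of the source is $H\otimes A\otimes \mathbb M_n(\Bbbk) \cong H\otimes \mathbb M_n(A)$, on which I would set
\[
 \phi(h\otimes X)(g)(u) = h(g^{-1})\, X\cdot L_g(u),
\]
where $h\in H$, $X\in \mathbb M_n(A) = \End_A M$ acts on $M = A^n$ by matrix multiplication, and $L_g$ is extended from $A$ to $M$ coordinatewise. The remaining task is to check that $\phi$ takes values in $\Cend^{G,V}_n$, is an $H$-module isomorphism, and intertwines the $\gamma$-products.

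Well-definedness is straightforward. Local regularity holds because $g\mapsto h(g^{-1})$ is a regular function on $G$ (the antipode $S$ being a morphism of varieties), while $g\mapsto L_g(u)$ is a regular $M$-valued function on $G$ via the coaction applied entrywise. T-invariance is the one-line check
\[
 \phi(h\otimes X)(g)(fu) = h(g^{-1})X\cdot L_g(f)L_g(u) = L_g(f)\cdot \phi(h\otimes X)(g)(u),
\]
whose key step is that $X$ is $A$-linear, so it commutes with the scalar $L_g(f)\in A$. $H$-linearity is immediate from the definition of the $H$-action \eqref{eq:H-mod}. Bijectivity follows because any $a\in\Cend^{G,V}_n$ is determined by its values $a(g)e_j\in M$, which give regular functions $F_{ij}\in H\otimes A$ via $a(g)e_j=\sum_i F_{ij}(g)e_i$; bijectivity of the antipode $S:H\to H$ (automatic for a group) allows one to invert these coefficient-by-coefficient to recover a unique preimage in $H\otimes \mathbb M_n(A)$.

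The substantive step is verifying that $\phi$ preserves the $\gamma$-products, and I would carry out the computation on generators $X=aE_{ij}$, $Y=bE_{kl}$ with $a,b\in A$. On the $\Cend$-side, using $L_\gamma L_{z\gamma^{-1}}=L_z$ (from $L_{g_1}L_{g_2}=L_{g_2g_1}$), direct computation yields
\[
 \bigl(\phi(h_1\otimes X)\oo{\gamma}\phi(h_2\otimes Y)\bigr)(z)(u)
  = \delta_{jk}\,h_1(\gamma^{-1})h_2(\gamma z^{-1})\, a\,L_\gamma(b)\, L_z(u_l)\,e_i,
\]
with $u=\sum_m u_m e_m$. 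On the $\Diff$-side, since the coaction is $\Delta_A\otimes\idd$, formula \eqref{eqGprod-tmp} gives
\[
 (h_1\otimes X)\oo{\gamma}(h_2\otimes Y)
  = \delta_{jk}\, h_1(\gamma^{-1})\,b_{(1)}(\gamma)\,L_\gamma(h_2)\,\otimes\, a\,b_{(2)}\,E_{il},
\]
and applying $\phi$ and collapsing the coproduct sum via $\sum b_{(1)}(\gamma)b_{(2)}=L_\gamma(b)$ produces the same expression. The main obstacle is keeping the Sweedler bookkeeping straight across the two sides, but no deeper tool is needed beyond the definition of $L_\gamma$ and the commutativity of $A$.
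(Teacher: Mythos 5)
Your proof is correct and takes essentially the same route as the paper: your map $\phi$ is precisely the inverse $\Phi^{-1}$ that the paper exhibits explicitly (sending $\sum h_i\otimes f_j\otimes a_{ij}$ to $a(z)=\sum h_i(z^{-1})f_jL_z\otimes a_{ij}$), and your check that the $\gamma$-products agree is the mirror image of the paper's computation of $(a\oo{z}b)(x)e_k$ against formula \eqref{eqGprod-tmp}.
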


\begin{proof}
Let us fix a basis
$\{e_k\}_{k=1}^n$ of a free $A$-module $M$.
Then $M$ is isomorphic to $A\otimes \Bbbk^n$.

An arbitrary element $a\in \Cend^{G,V}_n$ is uniquely defined
by a collection of regular $M$-valued functions
$a(x)e_k\in H\otimes M$,
$k=1,\dots, n$. Assume
\[
a(x)e_k = \sum\limits_{i\in I} h_i\otimes u_{ik}, \quad u_{ik} =
\sum\limits_{j\in J} f_j\otimes v_{ijk},
\]
where
$\{h_i\}_{i\in I}$ is a basis of $H$,
$\{f_j\}_{j\in J}$ is a basis of $A$,
$v_{ijk}\in \Bbbk^n$.
Then
\[
  a(z)\left (\sum\limits_{k=1}^{n}g_ke_k\right)
=
 \sum\limits_{i\in I,j\in J} \sum\limits_{k=1}^{n}
 h_i(z)f_j L_z g_k v_{ijk},
\quad
g_k\in A.
\]
Consider the linear maps
$a_{ij}\in \End \Bbbk^n\simeq \mathbb M_n(\Bbbk)$
that are defined by their values on the canonical basis:
$a_{ij}e_k = v_{ijk}$, $k=1,\dots ,n$.
Denote
\[
C=\Diff(A\otimes \mathbb M_n(\Bbbk), \Delta_A\otimes \idd)=H\otimes A \otimes
\mathbb M_n(\Bbbk ),
\]
and define
\[
\Phi: a\mapsto
  \Phi(a) = \sum\limits_{i\in I,j\in J} S(h_i) \otimes f_j\otimes a_{ij}\in C.
\]
The map $\Phi: \Cend^{G,V}_n \to C$ constructed is
$H$-linear, and for all $a,b\in \Cend^{G,V}_n$, $z\in G$ we have
\begin{multline}\nonumber
(a\oo{z} b)(x)e_k = a(z)b(xz^{-1})e_k =
\sum\limits_{i\in I,j\in J} h_{i(1)}h_{i(-2)}(z)\otimes a(z)(f_jb_{ij}e_k)\\
=
\sum\limits_{i,l\in I,\, j,p\in J}
h_l(z)h_{i(1)}h_{i(-2)}(z) \otimes f_pL_zf_j \otimes a_{lp}b_{ij}e_k
=(\Phi(a)\oo{z} \Phi(b))(x)e_k,
\end{multline}
where the right-hand side is computed by \eqref{eqGprod-tmp}.
Therefore, $\Phi(a\oo{z} b) =\Phi(a)\oo{z} \Phi(b)$, $z\in G$,
so $\Phi $ is a homomorphism of conformal algebras.
The inverse map $\Phi^{-1}$ is given by the rule
\[
  \sum\limits_{i\in I,j\in J} h_i\otimes f_j\otimes a_{ij}
  \mapsto a\in \Cend^{G,V}_n,
\]
where
$a(z) = \sum\limits_{i\in I,j\in J}
  h_i(z^{-1})f_jL_z\otimes a_{ij}\in \End M$.
Hence, $\Phi $ is an isomorphism.
\end{proof}

Hereinafter, we identify
 $\Cend^{G,V}_n$ and the conformal algebra
$H\otimes A\otimes \mathbb M_n(\Bbbk)
     \simeq H\otimes \mathbb M_n(A)$
with operations \eqref{eqGprod-tmp}.

Define an $H$-linear map
$\mathcal F : H\otimes A \to H\otimes A$
as follows:
$\mathcal F(f\otimes a) = fa_{(-1)}\otimes a_{(2)}$.
This map is invertible:
$\mathcal F^{-1}(h\otimes a) = ha_{(1)}\otimes a_{(2)} $.
We will also denote by
 $\mathcal F$
the linear transformation $\mathcal F\otimes \idd$
of the space
$ H\otimes A\otimes \mathbb M_n(\Bbbk)
\simeq H\otimes \mathbb M_n(A)$.

\begin{prop}\label{prop:Ideals}
{\rm 1.}
A right ideal of
$\Cend^{G,V}_n$ is of the form
$B=H\otimes B_0$, where $B_0$ is a right ideal of
$\mathbb M_n(A)$. \\
{\rm 2.}
A left ideal of
$\Cend^{G,V}_n$ is of the form
$B= \mathcal F(H\otimes B_0)$,
where $B_0$ is a left ideal of
$\mathbb M_n(A)$,\\
{\rm 3.}
Conformal algebra $\Cend^{G,V}_n$
is simple if and only if $G$ acts transitively on~$V$.
\end{prop}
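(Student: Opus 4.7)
Parts~1 and~2 are ``normal form'' statements: right and left ideals of $\Cend^{G,V}_n\cong H\otimes\mathbb M_n(A)$ correspond bijectively to right (respectively left) ideals of the matrix algebra $\mathbb M_n(A)$, with a twist by $\mathcal F$ in the left case. Part~3 then extracts simplicity from the combined normal form together with the correspondence between $G$-invariant ideals of $A$ and $G$-invariant closed subsets of $V$. I would begin with Part~1. Sufficiency is immediate from formula~\eqref{eqGprod-tmp}: if $a\in B_0$ then the $A$-part $ab_{(2)}$ of $(h\otimes a)\oo{\gamma}(f\otimes b)$ lies in $B_0$, so $H\otimes B_0$ is closed under right multiplication. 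For necessity, the key identity is $(h\otimes a)\oo{\gamma}(1\otimes 1)=h(\gamma^{-1})\otimes a$: applied to $\sum_\alpha h_\alpha\otimes a_\alpha\in B$ it gives $1\otimes\sum_\alpha h_\alpha(\gamma^{-1})a_\alpha\in B$ for every $\gamma\in G$. Setting $B_0=\{a\in\mathbb M_n(A):1\otimes a\in B\}$, and using that the evaluation functionals $\{\mathrm{ev}_\gamma\}_{\gamma\in G}$ span the dual of any finite-dimensional subspace of $H$ (since regular functions separate points of~$G$), one recovers each $a_\alpha$ individually, giving $B=H\otimes B_0$. Finally $(1\otimes a)\oo{e}(1\otimes b)=1\otimes ab$ exhibits $B_0$ as a right ideal of $\mathbb M_n(A)$.

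For Part~2 I would transport the problem through the $H$-linear bijection $\mathcal F$: since $\mathcal F$ commutes with left multiplication by $H$ on the first tensor factor, $B$ is a left ideal iff $\mathcal F^{-1}(B)$ is an $H$-submodule closed under the twisted product $x\ast_\gamma y:=\mathcal F^{-1}(\mathcal F(x)\oo{\gamma}\mathcal F(y))$. A Sweedler computation---combining that $\Delta_A$ is an algebra homomorphism, that $S$ is an anti-coalgebra morphism, and the antipode identity $m(\mathrm{id}\otimes S)\Delta=1_H\varepsilon$---should show that, when $\mathcal F^{-1}((h\otimes c)\oo{\gamma}\mathcal F(f\otimes b))$ is written as $\sum_i f_i\otimes a_i$, every $a_i$ lies in the left $\mathbb M_n(A)$-ideal generated by $b$. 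Granted this, the argument of Part~1 applies with left and right interchanged, yielding $\mathcal F^{-1}(B)=H\otimes B_0$ for a left ideal $B_0\subseteq\mathbb M_n(A)$, i.e.\ $B=\mathcal F(H\otimes B_0)$. Verifying this Sweedler cancellation is the main technical obstacle; the computation is tangled because $\mathcal F$ mixes the two factors through the antipode and the coaction, but one can cross-check it on small examples such as $A=\Bbbk[x]$, $G=\mathbb A^1$, where the expected formal $\gamma$-dependent extra terms are seen to cancel out.

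For Part~3, let $B$ be a nonzero two-sided ideal. By Part~1, $B=H\otimes B_0$ for a right ideal $B_0$, and by Part~2, $B=\mathcal F(H\otimes B_0')$ for a left ideal $B_0'$. Applying $\varepsilon\otimes\mathrm{id}$ to both descriptions and using $\varepsilon\circ S=\varepsilon$ together with the comodule counit axiom $(\varepsilon\otimes\mathrm{id})\Delta_A=\mathrm{id}_A$, one obtains $B_0=B_0'$, which is therefore a two-sided ideal of $\mathbb M_n(A)$. Since $A$ is commutative, $B_0=\mathbb M_n(I)$ for an ideal $I\subseteq A$, and the constraint $\mathcal F(H\otimes\mathbb M_n(I))=H\otimes\mathbb M_n(I)$, read off from $\mathcal F(h\otimes b)=hS(b_{(1)})\otimes b_{(2)}$, is equivalent to $\Delta_A(I)\subseteq H\otimes I$, i.e.\ to $I$ being $G$-invariant. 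Hence $I$ is the vanishing ideal of a $G$-invariant closed subset $V_0\subseteq V$. If $G$ acts transitively the only such $V_0$ are $\emptyset$ and $V$, forcing $I\in\{0,A\}$ and $B\in\{0,\Cend^{G,V}_n\}$, so $\Cend^{G,V}_n$ is simple. Conversely, any proper nonempty closed $G$-invariant $V_0\subsetneq V$ produces, via its vanishing ideal $I$, a proper nonzero two-sided ideal $H\otimes\mathbb M_n(I)$ by the same analysis, showing non-simplicity.
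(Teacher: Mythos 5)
Your overall scheme coincides with the paper's: establish the two normal forms and then reduce simplicity to the absence of non-trivial $G$-invariant ideals of $A$. The paper itself dismisses Parts 1 and 2 as routine (referring to the scheme of \cite{BKL}), so for those there is nothing to compare against; your Part 1 is a correct filling-in, and your Part 3 --- identifying $B_0=B_0'$ by applying $\varepsilon\otimes\idd$ to the two normal forms and extracting $G$-invariance of $I$ from the stability of $H\otimes\mathbb M_n(I)$ under $\mathcal F$ --- is a valid and more explicit variant of the paper's one-line assertion that $B_0$ must be a two-sided ideal invariant under left shifts. One cosmetic point there: $I$ need not be radical, so say ``the zero set of $I$ is a closed $G$-invariant subset'' rather than ``$I$ is the vanishing ideal of one''; since $A$ is reduced and $\Bbbk$ algebraically closed, $V(I)=V$ still forces $I=0$ and $V(I)=\emptyset$ forces $I=A$, so the conclusion survives.

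The genuine gap is in Part 2, in two places. First, the Sweedler cancellation you defer does not come out the way you predict. Expanding $\mathcal F(f\otimes b)=fS(b_{(1)})\otimes b_{(2)}$ and using $\Delta S=(S\otimes S)\tau\Delta$ together with the antipode axiom $S(b_{(2)})b_{(3)}=\varepsilon(b_{(2)})1$ in the middle legs, one finds
$$
(h\otimes c)\oo{\gamma}\mathcal F(f\otimes b)
= h(\gamma^{-1})\,(L_\gamma f)\,\bigl(S(b_{(1)})\otimes cb_{(2)}\bigr),
$$
and after applying $\mathcal F^{-1}$ the second tensor legs are $c_{(2)}b_{(3)}$; these do \emph{not} individually lie in the left ideal generated by $b$ (already for $G=\mathbb A^1$, $b=x$, $c=x$ one of the legs is $c_{(2)}b_{(3)}=1\notin (x)$). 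What saves the statement is a regrouping rather than a term-by-term membership: using $c_{(1)}S(c_{(2)})=\varepsilon(c)1$ one checks
$$
S(b_{(1)})\otimes cb_{(2)}=\sum_{(c)}\mathcal F\bigl(c_{(1)}\otimes c_{(2)}b\bigr),
$$
and $c_{(2)}b\in B_0$ because $B_0$ is a left ideal; this is the identity you actually need. Second, the necessity direction of Part 2 is not ``Part 1 with left and right interchanged'': the product \eqref{eqGprod-tmp} is asymmetric (the coaction is evaluated on the right factor, and $L_\gamma$ acts on the right factor's $H$-component), so there is no left analogue of the clean stripping identity $(h\otimes a)\oo{\gamma}(1\otimes1)=h(\gamma^{-1})(1\otimes a)$. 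One must instead combine left multiplication by $1\otimes d$ at $\gamma=e$ (which sends $f\otimes b$ to $f\otimes db$) with left multiplication by $1\otimes 1$ at varying $\gamma$ (which sends $f\otimes b$ to $b_{(1)}(\gamma)L_\gamma f\otimes b_{(2)}$), together with your span-of-evaluations argument, to show that $\mathcal F^{-1}(B)$ splits as $H\otimes B_0$. As written this direction is asserted rather than proved, and your Part 3 relies on it.
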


\begin{proof}
Statements 1 and 2 can be proved in a routine way
in accordance with a scheme from
\cite{BKL}.

To prove the statement~3, assume $B$ is a two-sided
ideal of $\Cend^{G,V}_n$.
Then $B=H\otimes B_0$ as a right ideal, and $B_0$
has to be a two-sided ideal invariant with respect to the left-shift action
of~$G$. This implies $B_0=A_0\otimes \mathbb M_n(\Bbbk )$,
where $A_0$ is a $G$-invariant ideal of $A=\Bbbk[V]$.
The converse is also true: if $A_0$ is a $G$-invariant ideal of $A$
then $B=H\otimes A_0\otimes \mathbb M_n(\Bbbk)$
is a (two-sided) ideal of
$\Cend^{G,V}_n$.
It remains to note that
the coordinate algebra of a $G$-set $V$
contains no non-trivial $G$-invariant ideals if and only if
$G$ acts transitively on~$V$.
\end{proof}

\subsection{Algebra of operators}
Consider the case when
 $V=G$  and  $G$ acts on $V$ by left multiplications.
Then $A=H$, $\Delta_A =\Delta $.
Let us denote by $\Cend_n$ the conformal algebra
$\Cend^{G,G}_n$.
The main investigation tool of the conformal algebra
$\Cend_n$ is the (ordinary) algebra generated by
linear operators of the form
$a(g)\in \End M_n$, $a\in \Cend_n$, $g\in G$.
Throughout the rest of the paper, the field
$\Bbbk $ is supposed to be algebraically closed.

As above,
we identify a free $n$-generated $H$-module
$M_n$ with $H\otimes \Bbbk^n$ choosing a basis.

Denote by $W_n$ the linear span in $\End M_n$ of all operators
of the form
$a(g)$, $a\in \Cend_n$, $g\in G$.
This is an (ordinary) associative subalgebra of $\End M_n$
since \eqref{eq:G-prod} implies
\begin{equation}\label{eq:prod}
a(g)b(z) = (a\oo{g} b)(zg), \quad g_1,g_2\in G.
\end{equation}
Algebra $W_n\subseteq \End M_n$ is equipped by the induced finite topology.

\begin{exmp}
It is easy to see that if
$a=\sum\limits_{i,j\in I}h_i\otimes h_j\otimes a_{ij}\in \Cend_n$
then
$a(g) = \sum\limits_{i,j\in I}h_i(g^{-1})h_jL_g\otimes a_{ij}$.
Therefore,

1) if $G=\{e\}$ then $W_n = \End \Bbbk^n\simeq \mathbb M_n(\Bbbk)$;

2) if  $G=\mathbb A^1$, $\mathrm{char}\,\Bbbk=0$,
then the algebra $W_n$
consists of operators
$$
  \sum\limits_{i=1}^m A_i(x)e^{\lambda_i\partial},
\quad A_i\in \mathbb M_n(\Bbbk[x]), \quad \lambda_i\in \Bbbk,
\ m\ge 0,
$$
that act on
$\Bbbk[x]\otimes \Bbbk^n$,
where $\partial $ is the ordinary derivation with respect to~$x$.
\end{exmp}

\begin{lem}\label{CH2-lemUH}
Suppose  $g_1,\dots, g_m$ be pairwise different elements of~$G$,
$m\ge 1$,
and let $U\subseteq G$ be a nonempty Zariski open subset.
Then there exist
$f_1,\dots, f_m \in H$ such that
\begin{equation}\label{eq:det}
\left| \begin{matrix}
L_{g_1}f_1 & L_{g_1}f_2 & \dots &L_{g_1}f_m \\
L_{g_2}f_1 & L_{g_2}f_2 & \dots &L_{g_2}f_m \\
\hdotsfor4 \\
L_{g_m}f_1 & L_{g_m}f_2 & \dots & L_{g_m}f_m
\end{matrix}\right |(z)\ne 0
\end{equation}
for some $z\in U$.
\end{lem}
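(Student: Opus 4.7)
The plan is to reduce the lemma to a Lagrange-interpolation-style statement on the affine variety $G$. First observe that the matrix entries simplify considerably via the coproduct: since $L_g h = h_{(1)}(g) h_{(2)}$ and $\Delta h(g,z) = h(gz)$, we have
\[
(L_{g_i} f_j)(z) = f_j(g_i z),
\]
so the determinant in \eqref{eq:det} evaluated at $z$ is just $\det\bigl(f_j(g_i z)\bigr)_{i,j=1,\dots,m}$.

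Next I would pick any point $z_0 \in U$ (possible because $U$ is nonempty). Because the $g_i$ are pairwise distinct elements of the group $G$, the translates $g_1 z_0, \dots, g_m z_0$ are also pairwise distinct points of $G$. Since $H = \Bbbk[G]$ is the coordinate ring of an affine variety, regular functions separate distinct points: for each fixed $i$, the ideals $\mathfrak m_{g_i z_0} \subset H$ are pairwise distinct maximal ideals, so by the Chinese remainder theorem the evaluation map
\[
H \longrightarrow \Bbbk^m, \qquad f \longmapsto \bigl(f(g_1 z_0),\dots, f(g_m z_0)\bigr),
\]
is surjective. In particular, for each $j = 1,\dots,m$ we can choose $f_j \in H$ with $f_j(g_i z_0) = \delta_{ij}$.

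For this choice of $f_1,\dots,f_m$ and of $z = z_0$, the matrix in \eqref{eq:det} is the identity matrix, so its determinant equals $1 \neq 0$. This is precisely the desired conclusion.

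There is essentially no serious obstacle here: the whole content of the lemma is the point-separation property of regular functions on an affine variety, together with the identification of the left-shift action with ordinary translation of the argument. If one wanted the stronger statement that a single choice of $f_j$ works for generic $z$ in $G$, one would further note that $\det\bigl(f_j(g_i z)\bigr)$ is a regular function of $z$ which is nonzero at $z_0$, hence nonvanishing on a dense open subset of the irreducible component of $G$ containing $z_0$; but this refinement is not needed for the stated lemma.
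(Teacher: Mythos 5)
Your proof is correct, and it reaches the conclusion by a more direct route than the paper. The paper argues by induction on $m$: it first arranges the $(m-1)\times(m-1)$ minor $h=|L_{g_i}f_j|_{i,j=2,\dots,m}$ to be nonzero somewhere on $U$, shrinks $U$ to the nonempty open set $U'$ where $h\ne 0$, fixes $z\in U'$, and then chooses a single extra function vanishing at the $m-1$ translates of $z$ by $g_2,\dots,g_m$ but not at the translate by $g_1$ (this uses exactly the point-separation property you invoke, but only in the weak form ``one point versus a finite closed set''); expanding the determinant along the first column finishes the step. You instead fix a single point $z_0\in U$ at the outset and build all $m$ functions simultaneously as a dual family at the $m$ pairwise distinct points $g_1z_0,\dots,g_mz_0$ via the Chinese remainder theorem, which makes the matrix equal to the identity at $z_0$. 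The two ingredients you use are both sound in the paper's conventions: $(L_{g_i}f_j)(z)=f_j(g_iz)$ is precisely $L_gh=h_{(1)}(g)h_{(2)}$, and the surjectivity of evaluation at finitely many distinct points onto $\Bbbk^m$ holds because the corresponding maximal ideals are pairwise comaximal (and $H/\mathfrak m_p\simeq\Bbbk$ since the paper has assumed $\Bbbk$ algebraically closed by this point). Your version avoids the induction entirely and in fact never uses that $U$ is open, only that it is nonempty; what the paper's inductive formulation buys is that it only ever needs the weaker separation statement, but since full interpolation at finitely many points is equally elementary, your argument is a legitimate and cleaner alternative.
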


\begin{proof}
For $m=1$ the statement is obvious.
Assume the lemma is true for a collection of $m-1$ pairwise different
$g_2,\dots, g_m\in G$.
Denote by
$h$ the determinant
$|L_{g_i}f_j|_{i,j=2,\dots,m}\in H$.
Then
$U'=\{z\in U\mid h(z)\ne 0\}$ is a nonempty open subset
of~$G$.

Consider an element $g_1\in G$ different from $g_2,\dots, g_m$;
then for any $z\in U'$ the elements
$z_i=g_i^{-1}z$, $i=1,\dots,m$,
are pairwise different.
Choose
$f'_1\in \{f\in H\mid f(z_2)=\dots=f(z_m)=0,\, f(z_1)\ne 0\}$.
Such a function $f_1'$ exists since
$z_1$ does not belong to the closure of
$\{z_2,\dots, z_m\}\subset G$.
Then $f_1=L_{g_1^{-1}}f'_1$ satisfies~\eqref{eq:det}.
\end{proof}

The following statement leads to a
``normal form'' of elements of the algebra $W_n$.

\begin{lem}\label{lem:W}
{\rm 1.} For every $a\in \Cend_n$, $g\in G$
the equality
$(a\oo{g} \Cend_n)=0$ holds if and only if
$a(g)=0$.

{\rm 2.}
If $\sum\limits_i a_i(g_i)=0$, where
$g_i\in G$ are pairwise different,
then  $a_i(g_i)=0$ for all~$i$.
\end{lem}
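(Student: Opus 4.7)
The plan is to establish part~1 by a direct computation from \eqref{eq:G-prod}, and then reduce part~2 to a linear-independence statement for evaluation functionals.

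For part~1, the forward implication is immediate: substituting $a(g)=0$ into $(a\oo{g} b)(z) = a(g)\,b(zg^{-1})$ yields zero. Conversely, if $(a\oo{g} b) = 0$ for every $b\in \Cend_n$, then $a(g)\,b(w) = 0$ as an operator on $M_n$ for every $w\in G$ and $b\in \Cend_n$, so to conclude $a(g)=0$ it suffices to show that the vectors $b(w)u$ already span $M_n$. Using the identification $\Cend_n\simeq H\otimes H\otimes \mathbb{M}_n(\Bbbk)$ and the explicit formula $a(g) = \sum h_i(g^{-1}) h_j L_g \otimes a_{ij}$ from the preceding example, taking $b = 1\otimes f\otimes E_{j1}$ together with $w=e$ sends $1\otimes e_1$ to $f\otimes e_j$; letting $f\in H$ and $j$ vary exhausts $H\otimes \Bbbk^n = M_n$.

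For part~2, I first reduce to $n=1$: writing $a_i = \sum_{s,t} a_{i,st}\otimes E_{st}$ with $a_{i,st}\in H\otimes H$, the hypothesis $\sum_i a_i(g_i)=0$ decomposes entrywise into $\sum_i a_{i,st}(g_i)=0$ for each pair $(s,t)$. In the scalar case the same example identifies $a_i(g_i) = M_{f^{(i)}}\circ L_{g_i}$ for some $f^{(i)}\in H$, where $M_f$ denotes multiplication by $f$, so the goal becomes $f^{(i)}=0$ for each $i$. Applying $\sum_i a_i(g_i)=0$ to an arbitrary $h\in H$ and evaluating at $x\in G$ gives $\sum_i f^{(i)}(x)\,h(g_ix) = 0$, i.e.\ $\sum_i f^{(i)}(x)\,\mathrm{ev}_{g_ix}=0$ as a functional on $H$.

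Since the $g_i$ are pairwise distinct so are the points $g_ix$, and evaluation functionals at distinct points of $G$ are linearly independent on $H = \Bbbk[G]$---this is in effect the content of Lemma~\ref{CH2-lemUH} applied with $U$ a small neighbourhood of $x$. Hence $f^{(i)}(x)=0$ for every $i$ and every $x\in G$, so $f^{(i)}=0$ and $a_i(g_i)=0$. The only real obstacle is organizational: keeping the tensor decomposition and the explicit form of $a(g)$ straight so that the reduction to $n=1$ and the factorization $a_i(g_i)=M_{f^{(i)}}L_{g_i}$ are transparent; once these are in place, the rest is routine.
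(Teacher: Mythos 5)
Your proof is correct, but for part~2 it takes a genuinely different route from the paper. The paper works with general $n$ throughout and never passes to the normal form of $a(g)$: it applies $\sum_i a_i(g_i)$ to elements $f_j u$, uses T-invariance \eqref{TinvCend} to get $\sum_i (L_{g_i}f_j)(z)\,(a_i(g_i)u)(z)=0$, and then invokes Lemma~\ref{CH2-lemUH} to choose $f_1,\dots,f_m$ making the matrix $\bigl((L_{g_i}f_j)(z)\bigr)$ invertible at a single point $z$ where some $(a_k(g_k)u)(z)\ne 0$, reaching a contradiction. You instead reduce entrywise to $n=1$ (a valid step, since $\mathbb M_n(\End H)\to\End(H\otimes\Bbbk^n)$ is injective), use the factorization $a_i(g_i)=M_{f^{(i)}}L_{g_i}$, and conclude pointwise from the linear independence of evaluation functionals at the distinct points $g_1x,\dots,g_mx$. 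Your argument is arguably cleaner: it avoids the determinant and the induction in Lemma~\ref{CH2-lemUH}, and it yields $f^{(i)}(x)=0$ at \emph{every} $x$ rather than a contradiction at one well-chosen point. The price is reliance on the explicit identification $\Cend_n\simeq H\otimes H\otimes\mathbb M_n(\Bbbk)$ (though the factorization also follows intrinsically from T-invariance: $a(g)h=L_gh\cdot a(g)1$). One small inaccuracy: the linear independence of $\mathrm{ev}_{p_1},\dots,\mathrm{ev}_{p_m}$ for distinct points is not literally ``Lemma~\ref{CH2-lemUH} applied with $U$ a small neighbourhood of $x$'' --- that lemma only produces a nonsingular matrix at \emph{some} point of $U$, and Zariski neighbourhoods are not small; what you actually need is the standard separation fact (each $p_k$ lies outside the closure of the remaining points), which is the same fact the paper uses inside its proof of Lemma~\ref{CH2-lemUH}, so the substance is fine. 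Part~1 matches the paper's (one-line) argument.
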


\begin{proof}
Statement 1 follows immediately from \eqref{eq:prod}.

To prove 2, suppose
$\sum\limits_{i=1}^m a_i(g_i)=0$.
Then for all $f\in H$
\[
0=\sum\limits_{i=1}^m a_i(g_i)f = \sum_i L_{g_i}f a(g_i)
\]
by \eqref{TinvCend}.
Assume there exists $u\in M_n$ such that $a(g_k)u \ne 0$ for some~$k$.
Fix such an index~$k$ and denote by $U$ the set of all
$z\in G$ such that
$(a(g_k)u)(z)\ne 0$
(we consider $M_n$ as the module of regular $\Bbbk^n$-valued functions on~$G$).
The set $U$ is Zariski open, so by Lemma \ref{CH2-lemUH}
there exist
$f_1,\dots, f_m\in H$
such that
$h(z)\ne 0$
for some $z\in U$,
where
$h=\det |L_{g_i}f_j|$.
But
\[
0=\sum\limits_{i=1}^m
(a_i(g_i)f_ju)(z)
=
\sum\limits_{i=1}^m (L_{g_i}f_j)(z) (a(g_i)u)(z),\quad j=1,\dots ,m,
\]
so $h(z)(a(g_i)u)(z)=0$ for all~$i$, therefore,
$(a(g_k)u)(z)=0$. The contradiction obtained proves~2.
\end{proof}

\begin{lem}\label{lem:Wmod}
{\rm 1.}
Algebra $W_n$
is a topological left $H$-module with respect to
the discrete topology on $H$ and finite topology on~$W_n$.

{\rm 2.}
Conformal algebra
$\Cend_n$ is a topological left $W_{n}$-module
with respect to the finite topology on $W_n$
and discrete topology on
$\Cend _n$.
\end{lem}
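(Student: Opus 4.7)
Part 1 is essentially immediate: for any $h \in H$, the multiplication operator $M_h : u \mapsto hu$ on $M_n$ equals $a(e)$ for $a = 1 \otimes h \otimes I \in \Cend_n$, so $M_h \in W_n$. Defining $h \cdot w := M_h w$ (operator composition) gives an $H$-action on $W_n$ because $W_n$ is a subalgebra of $\End M_n$ and $M_{h_1} M_{h_2} = M_{h_1 h_2}$; with $H$ discrete, continuity reduces to the fact that $w(v) = 0$ implies $(M_h w)(v) = 0$, so $M_h$ preserves every basic finite-topology neighborhood of $0$.

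For Part 2, fix $b \in \Cend_n = H \otimes H \otimes \mathbb M_n(\Bbbk)$ and write it as a finite sum $b = \sum_\beta h'_\beta \otimes f'_\beta \otimes A'_\beta$. I would exhibit a finite $V \subset M_n$ such that $w(V) = 0$ forces $w \cdot b = 0$; this shows $\ann_{W_n}(b)$ contains the basic open set $\{w : w(v) = 0,\ v \in V\}$, so the action is continuous. By Lemma \ref{lem:W}, every $w \in W_n$ has a decomposition $w = \sum_i a_i(g_i)$ with pairwise distinct $g_i$ and $w \cdot b := \sum_i (a_i \oo{g_i} b)$ is well-defined. A direct Sweedler computation of $(w \cdot b)(z)e_k = \sum_i a_i(g_i) b(zg_i^{-1}) e_k$, using $b(y) e_k = \sum_\beta h'_\beta(y^{-1}) f'_\beta \otimes A'_\beta e_k$, the multiplicativity identities $h'_\beta(g_i z^{-1}) = h'_{\beta(1)}(g_i) h'_{\beta(2)}(z^{-1})$ and $L_{g_i} f'_\beta = f'_{\beta(1)}(g_i) f'_{\beta(2)}$, and the T-invariance $a_i(g_i)(f'_\beta \otimes e_l) = L_{g_i} f'_\beta \cdot a_i(g_i) e_l$, yields
\[
(w \cdot b)(z) e_k = \sum_{\beta, l} (A'_\beta)_{lk}\, h'_{\beta(2)}(z^{-1})\, f'_{\beta(2)} \cdot \tilde c^\beta_l,
\qquad
\tilde c^\beta_l := \sum_i \phi_\beta(g_i)\, a_i(g_i) e_l,
\]
with $\phi_\beta := h'_{\beta(1)} f'_{\beta(1)} \in H$. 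So it is enough to arrange $\tilde c^\beta_l = 0$ in $M_n$ for every $\beta$ and $l$.

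The key identity is $\tilde c^\beta_l(z) = \bigl(w((R_{z^{-1}} \phi_\beta) \otimes e_l)\bigr)(z)$, where $R_g h(x) := h(xg)$ is right translation on $H$: T-invariance moves $R_{z^{-1}} \phi_\beta$ past each $a_i(g_i)$, and evaluation at $z$ collapses the shifted function to the scalar $\phi_\beta(g_i)$. Now $R_g \phi_\beta = \phi_{\beta(2)}(g)\, \phi_{\beta(1)}$ always lies in the finite-dimensional $\Spann\{\phi_{\beta(1)}\}$ coming from the Sweedler expansion of $\phi_\beta$, so $H_0 := \Spann\{R_g \phi_\beta : g \in G,\ \beta\}$ is finite-dimensional. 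Picking a basis $h_1, \dots, h_N$ of $H_0$ and setting $V := \{h_s \otimes e_l : 1 \le s \le N,\ 1 \le l \le n\}$ gives a finite subset of $M_n$ with the desired property: if $w$ vanishes on $V$, linearity yields $w((R_{z^{-1}} \phi_\beta) \otimes e_l) = 0$ in $M_n$ for every $z \in G$, hence $\tilde c^\beta_l(z) = 0$ for every $z$, so $\tilde c^\beta_l = 0$ and $w \cdot b = 0$. The main obstacle is the Sweedler manipulation that cleanly separates the $z$-part from the $g_i$-part of $(w \cdot b)(z) e_k$; once the weighted combination $\tilde c^\beta_l$ is isolated, local finiteness of the right translations on $H$ is exactly what converts the infinite range of possible $\{g_i\}$ into the fixed finite-dimensional condition $V$.
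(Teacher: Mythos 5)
There is a genuine gap in your Part 1: you prove continuity for the wrong $H$-action. The module structure the lemma refers to is the one induced from the $H$-module structure \eqref{eq:H-mod} of $\Cend_n$, namely $h\cdot a(g)=h(g^{-1})a(g)$ (formula \eqref{Wmod1} in the paper), not left composition with the multiplication operator $\Gamma(h)=M_h$. These genuinely differ: for $a=1\otimes 1\otimes E$ one has $a(g)=L_g$, and $M_hL_g\ne h(g^{-1})L_g$ unless $h$ is constant. Your version of Part 1 is true but essentially content-free ($W_n$ is a topological module over any subalgebra via left multiplication), whereas the intended statement has two nontrivial points that your argument does not touch: (i) well-definedness of $h\cdot\alpha$ on $W_n$, since $h\cdot a(g)$ is prescribed on the spanning set $\{a(g)\}$ and one must use the normal form of Lemma~\ref{lem:W}(2) to see it descends; and (ii) continuity, which the paper obtains from the identity $h\cdot\alpha=h_{(1)}\alpha h_{(-2)}$ (a finite sum of two-sided compositions with fixed multiplication operators, using T-invariance and the antipode axioms). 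The correct Part 1 is not a cosmetic issue: it is used in the paper's own proof of Part 2 (via $\alpha_k\cdot hx=h_{(2)}((h_{(-1)}\cdot\alpha_k)\cdot x)$) and again in Theorem~\ref{thm:auto}, so your substitute does not serve its downstream role.

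Your Part 2, by contrast, is correct and takes a different route from the paper. The paper first treats $x=1\otimes f\otimes a$, where the finite test set is just the $n$ columns $f\otimes ae_j$, and then bootstraps to general $x=hx_0$ using Part 1. You instead compute $(w\cdot b)(z)e_k$ directly for arbitrary $b$, isolate the weighted sums $\sum_i\phi(g_i)a_i(g_i)e_l$, identify them with evaluations of $w$ on right-translates $R_{z^{-1}}\phi\otimes e_l$, and use the fact that $\{R_g\phi:g\in G\}$ spans the finite-dimensional space $\Spann\{\phi_{(1)}\}$ to produce a finite test set $V$. This is self-contained (it does not rely on Part 1) and proves openness of $\ann_{W_n}(b)$ in one stroke, which also yields Corollary~\ref{lem:conv} directly; the price is heavier Sweedler bookkeeping, and your notation $\phi_\beta=h'_{\beta(1)}f'_{\beta(1)}$ with $h'_{\beta(2)},f'_{\beta(2)}$ left outside should really be written as a finite sum over the coupled Sweedler components of $h'_\beta$ and $f'_\beta$ (a finite family of functions $\phi$, which is all the argument needs). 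You should also note that you use that an element of $M_n=H\otimes\Bbbk^n$ vanishing at every point of $G$ is zero, which holds since $H=\Bbbk[G]$ is reduced and $\Bbbk$ is algebraically closed.
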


\begin{proof}
It follows from Lemma \ref{lem:W} that the actions of
$H$ on $W_n$ and of $W_n$ on $\Cend_n$ are well defined by
the rules
\begin{gather}
h\cdot a(g)=h(g^{-1})a(g),\quad a\in \Cend_n,\ h\in H,\ g\in G,
  \label{Wmod1}\\
a(z)\cdot b = (a\oo{z} b),\quad  a,b\in \Cend_n,\ z\in G.
  \label{Wmod2}
\end{gather}
The associativity condition is obvious for
\eqref{Wmod1} and could be easily checked for \eqref{Wmod2}.
It remains to show that these maps are continuous.

1. Suppose $W_n\ni \alpha_k \to 0$ as $k\to \infty$.
Each $\alpha_k$ can be presented in the normal form from
Lemma~\ref{lem:W}(2).
It is necessary to check that
$(h\cdot \alpha_k)\to 0$ for a fixed $h\in H$.

Indeed, for every
$a\in \Cend_n$, $h\in H$, $g\in G$
we have
\begin{multline}\nonumber
  h_{(1)}a(g)h_{(-2)} = h_{(1)}L_gh_{(-2)}a(g)
 =h_{(1)}h_{(-3)}(g)h_{(-2)}a(g) \\
 = \varepsilon (h_{(1)})h_{(-2)}(g)a(g)
 = h(g^{-1})a(g),
\end{multline}
therefore,
\[
h_{(1)} \alpha_k h_{(-2)} = h\cdot\alpha_k .
\]
Since $\Delta(h)=h_{(1)}\otimes h_{(2)}$
involves only a finite number of summands,
the sequence
$(h\cdot \alpha_k)$ converges to zero
in the sense of finite topology on~$W_n$.

2. Consider a sequence $\alpha _k\to 0$ and an element
$x\in \Cend_n$.
We need to show that
$\alpha_k\cdot x = 0$ in $\Cend_n$ for sufficiently large~$k$.
If
$x=1\otimes f \otimes a$,
$f\in H$, $a\in \mathbb M_n(\Bbbk)$,
then the claim obviously follow from the definition of finite topology
(the action of $\alpha _k$ on $x$ can be considered
on the columns of~$f\otimes a$ separately).
Note that
\[
\alpha_k \cdot hx = h_{(2)} ((h_{(-1)}\cdot \alpha_k)\cdot x),
\quad h\in H,
\]
therefore, statement~1 implies $\alpha _k\cdot hx = 0$
as $k\gg 0$.
\end{proof}

\begin{cor}\label{lem:conv}
{\rm 1.}
If $\{\alpha_k\}_{k\ge 0}$ is a fundamental sequence
if~$W_n$ then for every $a\in \Cend_n$
there exists a natural $m$ such that
$\alpha_p\cdot a =\alpha_{q}\cdot a$
for all $p,q\ge m$.

{\rm 2.}
If a sequence $\{\alpha _k\}_{k\ge 0}$
converges to $ \alpha \in W_n$ as $k\to \infty $
then $\alpha _k\cdot a = \alpha \cdot a$
for any $a\in \Cend_n$ as $k\gg 0$.
\qed
\end{cor}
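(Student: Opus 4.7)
My plan is to derive both statements directly from Lemma \ref{lem:Wmod}(2), which has just established that the action $W_n\times \Cend_n\to \Cend_n$ is continuous when $W_n$ is given the finite topology and $\Cend_n$ the discrete topology. The corollary is essentially the translation of this continuity into sequential language, together with the elementary observation that a Cauchy (resp.\ convergent) sequence in a discrete space is eventually constant.

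For statement~1, I fix $a\in \Cend_n$ and consider the $\Bbbk$-linear map $\phi_a:W_n\to \Cend_n$ defined by $\phi_a(\alpha)=\alpha\cdot a$. The continuity supplied by Lemma \ref{lem:Wmod}(2) at $0\in W_n$, combined with linearity, yields continuity of $\phi_a$ at every point. Because $\Cend_n$ is discrete, $\{0\}$ is open, so $U:=\phi_a^{-1}(0)$ is an open neighborhood of $0$ in $W_n$. The definition of a fundamental sequence then furnishes $m$ with $\alpha_p-\alpha_q\in U$ for all $p,q\ge m$, and linearity of $\phi_a$ gives $\alpha_p\cdot a-\alpha_q\cdot a=\phi_a(\alpha_p-\alpha_q)=0$, which is what was required.

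For statement~2, I apply the same map $\phi_a$ to $\beta_k:=\alpha_k-\alpha$, which tends to $0$ in $W_n$. Continuity of $\phi_a$ gives $\phi_a(\beta_k)\to 0$ in $\Cend_n$, and discreteness forces $\phi_a(\beta_k)=0$ for all sufficiently large $k$, i.e.\ $\alpha_k\cdot a=\alpha\cdot a$. There is no serious obstacle; the one point worth a sentence of justification is the $\Bbbk$-linearity of $\phi_a$ in its first argument, but this is immediate from \eqref{Wmod2}, since any $\alpha\in W_n$ is a finite linear combination of operators $a_i(g_i)$ acting on $b\in \Cend_n$ via $(a_i\oo{g_i}b)$, which is $\Bbbk$-linear in each slot. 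Thus the corollary is a few lines of formal topology on top of Lemma \ref{lem:Wmod}.
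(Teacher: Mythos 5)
Your argument is correct and is exactly the route the paper intends: the corollary carries a \qed in its statement precisely because it is the sequential reformulation of the continuity established in Lemma~\ref{lem:Wmod}(2), combined with the observation that a Cauchy or convergent sequence in a discrete target is eventually constant. Your one added care point --- that $\phi_a$ is $\Bbbk$-linear and that $\phi_a^{-1}(0)$ is an open neighbourhood of $0$ (which needs the genuine, not merely sequential, continuity of the action, as supplied by the basic-neighbourhood description of the finite topology used in the proof of Lemma~\ref{lem:Wmod}) --- is well placed and closes the only gap one could worry about.
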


\begin{prop}\label{prop:irred}
Algebra $W_n\subseteq \End M_n$
acts on $M_n$ irreducibly.
\end{prop}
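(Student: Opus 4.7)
The plan is to read off a convenient generating family for $W_n$ from the explicit isomorphism $\Cend_n\simeq H\otimes H\otimes \mathbb M_n(\Bbbk)$ established above, and then to reduce irreducibility of the $W_n$-module $M_n=H\otimes \Bbbk^n$ to a one-variable statement about multiplications and left shifts on $H$.

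First I would extract generators from the formula $a(z)=h(z^{-1})fL_z\otimes E$ attached to $a=h\otimes f\otimes E\in \Cend_n$. Taking $h=1$ shows that $fL_z\otimes E\in W_n$ for every $f\in H$, $z\in G$ and $E\in \mathbb M_n(\Bbbk)$. Specialising further, $W_n$ contains all matrix units $\idd\otimes E_{ij}$ (by $z=e$, $f=1$), all $H$-multiplications $f\otimes \idd$ (by $z=e$, $E=\idd$) and all pure left shifts $L_z\otimes \idd$ (by $f=1$, $E=\idd$).

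Given $0\neq u=\sum_{i=1}^n h_i\otimes v_i\in M_n$, pick $k$ with $h_k\neq 0$. Applying $\idd\otimes E_{kk}$ produces $h_k\otimes v_k$, and once we know how to reach $1\otimes v_k$ from $h_k\otimes v_k$ inside the single column, every $h'\otimes v_j$ follows by an $H$-multiplication combined with an $E_{jk}$. Thus the proposition reduces to proving that for every $0\neq h\in H$ the unital subalgebra $R\subseteq \End H$ generated by $H$-multiplications and by all shifts $\{L_z:z\in G\}$ satisfies $Rh=H$.

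For this one-variable reduction I would appeal to Hilbert's Nullstellensatz. Let $V=\Spann_{\Bbbk}\{L_zh:z\in G\}\subseteq H$. For any fixed $x\in G$ the value $(L_zh)(x)=h(zx)$ ranges over $h(G)$ as $z$ varies, and $h\neq 0$ means $h(G)\neq\{0\}$; hence $V$ has empty common zero set on $G$. Since $\Bbbk$ is algebraically closed and $H=\Bbbk[G]$ is the coordinate ring of the affine variety $G$, the Nullstellensatz gives $H\cdot V=H$, i.e.\ $1=\sum_i p_i L_{z_i}h$ for some $p_i\in H$, $z_i\in G$, so $1\in Rh$. As $R$ contains all $H$-multiplications, $Rh\supseteq H\cdot 1=H$ and the reduction is complete.

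The step I expect to require the most care is simply the bookkeeping of the first paragraph: checking that the specific operators I invoke are genuinely realised inside $W_n$ and not merely in its finite-topology closure. This is handled by the specialisation argument at $z=e$. The one substantive conceptual ingredient is the Nullstellensatz step, which relies crucially on the hypothesis stated at the beginning of this subsection that $\Bbbk$ is algebraically closed, together with the transitive left action of $G$ on itself.
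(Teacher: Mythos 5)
Your proposal is correct and follows essentially the same route as the paper: reduce to the rank-one statement via matrix units and $H$-multiplications, then show $W_1h=H$ for $h\ne 0$. The paper phrases the last step as ``$W_1h$ is a left-shift-invariant ideal of $H$, hence trivial by transitivity of the action of $G$ on itself,'' which is exactly the Nullstellensatz argument you spell out explicitly.
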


\begin{proof}
Note that for every
$ f\in H\simeq M_1$, $f\ne 0$,
the submodule $W_1f$ is an ideal of $H$
invariant with respect to left shifts.
Hence, $W_1f=H$.

Consider an arbitrary element
$u=\sum\limits_{k=1}^{n}f_k\otimes e_k\in M_n$,
$u\ne 0$. Without loss of generality, assume
$f_1\ne 0$.
It is easy to see that for every $i=1,\dots , n$ and
for every $h\in H$ there exists
$\alpha_{i,h} \in W_n$ such that $\alpha_{i,h} u = h\otimes e_i$:
$\alpha_{i,h} =\sum\limits_{j} h_jL_{z_j}\otimes e_{i1}$,
where $\sum\limits_{j}h_jL_{z_j}f_1 = h$.
Therefore, for any
$v=\sum\limits_{k=1}^{n}h_k\otimes e_k\in M_n$
one may build
$\alpha =\sum\limits_{k=1}^{n} \alpha _{k,h_k}\in W_n$
such that $\alpha u=v$.
\end{proof}

Note that  $W_n$ contains the operators $\Gamma(h): M_n\to M_n$,
$\Gamma (h)u= hu$, $h\in H$, $u\in M_n$.
Indeed, $\Gamma (h) = hL_e = (1\otimes h\otimes E)(e)$,
where $E$ is the identity matrix.

\begin{prop}\label{prop:dense}
Let $S$ be a subalgebra of $W_n$ that acts irreducibly on
$M_n$, and suppose $W_n$ is closed under multiplication by operators
$\Gamma(h)$, $h\in H$,
on the left and on the right, i.e.,
$HS, SH \subseteq S$.
Then the space of $S$-invariant linear transformations of the
space~$M_n$ consists of $H$-module endomorphisms.
\end{prop}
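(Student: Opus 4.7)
The plan is as follows. An ``$S$-invariant linear transformation'' of $M_n$ is a $\Bbbk$-linear map $\phi\colon M_n\to M_n$ commuting with every $\alpha\in S$, so to establish that $\phi$ is an $H$-module endomorphism it is enough to prove that $\phi$ commutes with every operator $\Gamma(h)$, $h\in H$.

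Fix $h\in H$ and $\alpha\in S$. The hypothesis $SH\subseteq S$ guarantees that $\alpha\Gamma(h)\in S$ as well, and hence $\phi$ commutes with this product: $\phi\alpha\Gamma(h)=\alpha\Gamma(h)\phi$. Substituting $\phi\alpha=\alpha\phi$ on the left-hand side, I would obtain
\[
\alpha\bigl(\phi\Gamma(h)-\Gamma(h)\phi\bigr)=0
\]
for every $\alpha\in S$. Writing $\beta:=\phi\Gamma(h)-\Gamma(h)\phi$, this says precisely that the image of $\beta$ is contained in the subspace $V:=\{u\in M_n : Su=0\}$.

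The subspace $V$ is visibly $S$-invariant, so by irreducibility $V=0$ or $V=M_n$; the second alternative would give $SM_n=0$, in which case every subspace of $M_n$ would be $S$-invariant and irreducibility would force $\dim_{\Bbbk}M_n\le 1$, a degenerate situation excluded under the standing hypotheses (e.g.\ because $H$ is nontrivial whenever $G$ is, or by comparison with Proposition \ref{prop:irred}). Therefore $V=0$, whence $\beta=0$ and $\phi$ commutes with each $\Gamma(h)$, as required. No real obstacle is foreseen: the argument is a short Schur-type observation whose entire content is that the inclusion $SH\subseteq S$ converts the $S$-invariance of $\phi$ directly into commutation with every $\Gamma(h)$. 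The symmetric hypothesis $HS\subseteq S$ is not used in this particular step and is presumably recorded for subsequent applications of the proposition.
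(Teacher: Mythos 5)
Your argument is correct, but it is not the paper's. The paper also reduces the problem to showing $[\varphi,\Gamma(h)]=0$, but gets there by a two-sided computation: using $HS\subseteq S$ it derives $[\varphi,\Gamma(h)]\,\alpha=0$ for all $\alpha\in S$ (the commutator killed on the \emph{right}), then uses $SH\subseteq S$ to check that the commutator itself lies in the centralizer $D=\End_S M_n$, which is a division algebra by Schur's Lemma; a nonzero element of a division algebra cannot annihilate a nonzero $\alpha$, so the commutator vanishes. You instead kill the commutator on the \emph{left} ($\alpha\,[\varphi,\Gamma(h)]=0$, using only $SH\subseteq S$) and conclude that its image lies in $V=\{u\in M_n : Su=0\}$, an $S$-invariant subspace which must be zero by irreducibility. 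Your route is more elementary (no Schur Lemma, no division-algebra structure) and, as you observe, uses only one of the two closure hypotheses, whereas the paper genuinely uses both. Two small remarks. First, if ``irreducible'' is read as in Definition~\ref{defn:irred} (no invariant $H$-submodules rather than no invariant subspaces), you should add that $V$ is an $H$-submodule; this again follows from $SH\subseteq S$, since $\alpha(hu)=(\alpha\Gamma(h))u=0$ for $u\in V$. Second, the degenerate alternative $SM_n=0$ is not literally ``excluded under the standing hypotheses'': it occurs exactly when $\dim_\Bbbk M_n\le 1$, i.e.\ $n=1$ and $G=\{e\}$, a case the paper allows; but there $\End M_n=\End_H M_n=\Bbbk$ and the conclusion holds vacuously, or one simply adopts the usual convention that an irreducible module satisfies $SM_n\ne 0$ (as the paper implicitly does when invoking Schur). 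Neither remark affects the validity of your proof.
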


\begin{proof}
Denote
\[
D=\End_SM_n :=\{\varphi\in \End M_n \mid \varphi\alpha=\alpha\varphi
\mbox{ for all } \alpha \in S\}.
\]
By the Schur Lemma, $D$ is a division algebra.
Consider arbitrary elements
$0\ne \alpha\in S$, $f\in H$, $\varphi \in D$.
Since $f\alpha, \alpha f \in S$, we have
\[
(\varphi f)\alpha =\varphi
(f\alpha)
 = (f \alpha)\varphi = (f\varphi)\alpha.
\]
Hence,
\begin{equation}\label{eq:comm}
[\varphi, f]\alpha = (\varphi f - f\varphi )\alpha =0.
\end{equation}
Note that
$\psi = \varphi f - f\varphi \in D$.
Indeed,
\[
\beta (\varphi f - f\varphi) = (\beta \varphi)f - (\beta f)\varphi
=  (\varphi \beta )f - \varphi(\beta f) = 0
\]
for every $\beta \in S$.
Relation \eqref{eq:comm} implies $[\varphi, f] = 0$,
and thus
$\varphi \in \End_H M_n$.
\end{proof}

\begin{lem}\label{lem:no_fields}
If $D$ is a division algebra in $\End_H M_n$ that contains
$\idd_{M_n}$ then  $D=\Bbbk $.
\end{lem}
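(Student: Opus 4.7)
The plan is to identify $\End_H M_n$ with the matrix algebra $\mathbb M_n(H)$ and exhibit, for each $d\in D$, a scalar $\lambda\in\Bbbk$ such that $d-\lambda\idd$ fails to be invertible. Since $D$ is a division algebra containing $\idd$, this forces $d=\lambda\idd$.

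Concretely, I would first fix the basis of $M_n=H\otimes\Bbbk^n$ as a free $H$-module, so that any $d\in\End_HM_n$ is represented by a matrix $(d_{ij})\in\mathbb M_n(H)$. Under this identification, $d$ is invertible in $\End_HM_n$ if and only if $\det d\in H^\times$. Note that any inverse of $d$ in the subalgebra $D$ remains an inverse in the larger algebra $\End_HM_n$, so invertibility in $D$ implies invertibility in $\End_HM_n$; contrapositively, if $d-\lambda\idd$ is not invertible in $\End_HM_n$, then it cannot be invertible in $D$, hence must equal $0$.

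Next, I would use algebraic closure of $\Bbbk$ to produce the needed $\lambda$. Pick any point $g_0\in G$ and evaluate the entries of the matrix of $d$ at $g_0$, getting a matrix $d(g_0)\in\mathbb M_n(\Bbbk)$. Since $\Bbbk$ is algebraically closed, $d(g_0)$ has an eigenvalue $\lambda\in\Bbbk$, so $\det(\lambda I-d(g_0))=0$. But $\det(\lambda\idd-d)\in H$ is a regular function on $G$ whose value at $g_0$ is exactly $\det(\lambda I-d(g_0))=0$; therefore $\det(\lambda\idd-d)$ is not a unit of $H$, and $\lambda\idd-d$ is not invertible in $\End_HM_n$. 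By the observation above, $\lambda\idd-d=0$, i.e.\ $d=\lambda\idd\in\Bbbk\cdot\idd$. Combining with $\Bbbk\cdot\idd\subseteq D$ (which holds because $\idd\in D$ and $D$ is a $\Bbbk$-algebra), we conclude $D=\Bbbk$.

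There is no real obstacle: the argument rests on two elementary facts, namely the determinantal criterion for invertibility of matrices over a commutative ring, and the existence of eigenvalues over the algebraically closed field $\Bbbk$. The only point worth being explicit about in the write-up is that invertibility in $D$ forces invertibility in the ambient $\End_HM_n$, which is what lets the non-vanishing criterion on the determinant do its work.
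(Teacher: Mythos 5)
Your proof is correct and takes essentially the same route as the paper's: identify $\End_H M_n$ with $\mathbb M_n(H)$, observe that invertibility in $D$ forces $\det(\lambda\idd-d)$ to be a unit of $H$ (hence nowhere vanishing on $G$), and use algebraic closure of $\Bbbk$ to produce a $\lambda$ for which this determinant vanishes at a chosen point. The only cosmetic difference is that the paper argues by contradiction from an $x\notin\Bbbk E$, whereas you directly exhibit $d=\lambda\idd$ for each $d\in D$.
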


\begin{proof}
The endomorphism algebra
 $\End_H M_n$
of the free $H$-module $M_n$ is isomorphic to the matrix algebra
$\mathbb M_n(H)$.
Suppose
$D\subseteq \mathbb M_n(H)$
is a division algebra, and let
$x$ stands for a nonzero element of~$D$ such that $x\notin \Bbbk E$,
where $E$ is the identity matrix.

Since $D$ contains $\lambda E$ for every $\lambda \in \Bbbk$,
the element $x+\lambda E\in D$ is nonzero, therefore, invertible
in $D$ for all $\lambda \in \Bbbk $.
Denote by $f(\cdot,\lambda)$ the function $z\mapsto \det(x +\lambda E)(z)$, $z\in G$.
This is a polynomial in $\lambda $ with coefficients in~$H$.
It is clear that
 $f(\cdot ,\lambda )=\lambda^n + \dots + \det x$,
$(\det x)(z)\ne 0$ at any point $z\in G$.
Since $\Bbbk $ is algebraically closed, for every $z\in G$
there exists $\lambda \in \Bbbk $ such that
$f(z,\lambda)=0$, which is impossible since
$x+\lambda E $ is invertible.
\end{proof}

\begin{thm}\label{thm:Wirred}
If $S\subseteq W_n$ is a subalgebra that acts irreducibly
on $M_n$ and $HS,SH\subset S$ then $S$
is a dense (over $\Bbbk$) subalgebra of $\End M_n$.
\end{thm}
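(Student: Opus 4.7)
The plan is to reduce the statement to the classical Jacobson density theorem, using the two preceding results (Proposition \ref{prop:dense} and Lemma \ref{lem:no_fields}) to identify the centralizer of $S$.

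First I would invoke Schur's Lemma: since $S$ acts irreducibly on $M_n$, the centralizer $D = \End_S M_n = \{\varphi \in \End_\Bbbk M_n \mid \varphi\alpha = \alpha\varphi \text{ for all } \alpha \in S\}$ is a division algebra over $\Bbbk$. It evidently contains $\idd_{M_n}$, and the hypothesis $HS, SH \subseteq S$ is exactly what Proposition \ref{prop:dense} needs, so $D \subseteq \End_H M_n$. Now Lemma \ref{lem:no_fields} applies to $D$, and yields $D = \Bbbk$.

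Second, I would apply the Jacobson density theorem to the irreducible $S$-module $M_n$: since its centralizer is $D = \Bbbk$, $S$ is dense in $\End_D M_n = \End_\Bbbk M_n$, which is precisely the conclusion. Concretely, this means that for any $\Bbbk$-linearly independent $u_1,\dots,u_k \in M_n$ and any $v_1,\dots,v_k \in M_n$, there exists $\alpha \in S$ with $\alpha u_i = v_i$ for all $i$.

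The main obstacle has already been dealt with in the two preparatory results: Proposition \ref{prop:dense} does the genuine work of showing that elements commuting with $S$ must be $H$-linear (which uses $HS, SH \subseteq S$ to slide $f \in H$ past the centralizer), and Lemma \ref{lem:no_fields} rules out nontrivial division subalgebras of $\mathbb M_n(H)$ using algebraic closedness of $\Bbbk$ together with the fact that $\det(x+\lambda E)$ must vanish somewhere. With $D$ pinned down to $\Bbbk$, no further work is required beyond citing Jacobson density; this step is essentially a one-line consequence of the standard theorem for irreducible representations with trivial endomorphism algebra.
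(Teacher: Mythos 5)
Your proposal is correct and follows essentially the same route as the paper: identify the centralizer $D=\End_S M_n$, embed it into $\End_H M_n$ via Proposition~\ref{prop:dense} (using $HS,SH\subseteq S$), pin it down to $\Bbbk$ via Lemma~\ref{lem:no_fields}, and conclude by the Jacobson Density Theorem. No gaps.
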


\begin{proof}
Recall that by the Jacobson Density Theorem
(see, e.g., \cite{Jac}) an algebra $A$ with a faithful irreducible
module $M$ and with a centralizer $D = \End_A M$
is a dense subalgebra of
$\End_D M$ with respect to the finite topology.
The latter means that for an arbitrary linearly independent over $D$ finite set
$u_1,\dots ,u_m\in M$
and for every
$v_1,\dots, v_m\in M$, $m\ge 1 $,
there exists $\alpha\in A $ such that
$\alpha u_i =v_i$, $i=1,\dots, m$.

By Proposition~\ref{prop:dense}, the centralizer $D=\End_S M_n$
is embedded into $ \End_H M_n$.
It follows from Lemma~\ref{lem:no_fields} that $D=\Bbbk $.
Hence, $S$ is dense in $\End M_n$.
\end{proof}

\begin{cor}\label{cor:Wdense}
The algebra $W_n$ is dense in $\End M_n$. \qed
\end{cor}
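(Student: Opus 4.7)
My plan is to derive Corollary \ref{cor:Wdense} as an immediate application of Theorem \ref{thm:Wirred} with $S = W_n$. To invoke that theorem I need to verify its two hypotheses for $W_n$ itself.

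First, irreducibility of the $W_n$-action on $M_n$ is exactly Proposition \ref{prop:irred}, so this hypothesis is supplied directly and requires no additional work.

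Second, I must check $HW_n, W_nH \subseteq W_n$, i.e.\ closure under left and right multiplication by the operators $\Gamma(h)$, $h\in H$. The paper has already observed that $\Gamma(h) = (1\otimes h\otimes E)(e) \in W_n$ for every $h\in H$. Since $W_n$ is an associative subalgebra of $\End M_n$ (closure under composition follows from \eqref{eq:prod}), the product of any $\Gamma(h)$ with any element of $W_n$, on either side, lies back in $W_n$. Thus $HW_n \subseteq W_n$ and $W_nH \subseteq W_n$.

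With both hypotheses verified, Theorem \ref{thm:Wirred} applies to $S = W_n$ and yields that $W_n$ is a dense subalgebra of $\End M_n$ with respect to the finite topology. There is no genuine obstacle here — the corollary is essentially a bookkeeping specialization of the theorem to the ambient algebra, the substantive content (Jacobson density plus the identification $\End_{W_n} M_n = \Bbbk$ via Proposition \ref{prop:dense} and Lemma \ref{lem:no_fields}) having already been absorbed into Theorem \ref{thm:Wirred}.
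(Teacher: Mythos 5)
Your proposal is correct and matches the paper's intent exactly: the corollary is stated with an immediate \(\square\) precisely because it is Theorem~\ref{thm:Wirred} applied to \(S=W_n\), with irreducibility supplied by Proposition~\ref{prop:irred} and the closure \(HW_n, W_nH\subseteq W_n\) following from the observation that \(\Gamma(h)=(1\otimes h\otimes E)(e)\in W_n\) and that \(W_n\) is a subalgebra of \(\End M_n\). Your verification of the two hypotheses is exactly the bookkeeping the paper leaves implicit.
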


\subsection{Automorphisms and irreducible subalgebras of the conformal algebra
$\Cend_n$}

\begin{thm}\label{thm:auto}
The group of automorphisms of the conformal algebra
$\Cend_n$
is isomorphic to the group of topological
(with respect to the finite topology)
$H$-invariant automorphisms of the algebra~$W_n$.
\end{thm}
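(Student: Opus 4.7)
The plan is to construct mutually inverse maps between $\mathrm{Aut}(\Cend_n)$ and the group $\mathcal G$ of $H$-invariant continuous automorphisms of $W_n$, given by
\[
\tilde\Phi(a(g))=\Phi(a)(g),\qquad \check\Psi(a)(g)=\Psi(a(g)),\qquad a\in\Cend_n,\ g\in G.
\]

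For the forward map $\Phi\mapsto\tilde\Phi$, well-definedness on single generators is immediate from Lemma~\ref{lem:W}(1): if $a(g)=0$ then $(a\oo{g}\Cend_n)=0$, hence $(\Phi(a)\oo{g}\Cend_n)=0$ and $\Phi(a)(g)=0$. The normal form of Lemma~\ref{lem:W}(2) then extends $\tilde\Phi$ to a $\Bbbk$-linear endomorphism of $W_n$; the identity \eqref{eq:prod} makes it multiplicative; and applying the same construction to $\Phi^{-1}$ produces its inverse. Formula \eqref{Wmod1} makes $\tilde\Phi$ tautologically $H$-linear. For continuity in the finite topology, assume $\alpha_k\to 0$; by Corollary~\ref{lem:conv}, $\alpha_k\cdot x=0$ eventually for each $x\in\Cend_n$, and applying $\Phi$ yields $\tilde\Phi(\alpha_k)\cdot y=0$ eventually for every $y\in\Cend_n$. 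Given $u=h\otimes e_i\in M_n$, choose $y=1\otimes h\otimes e_{ij}\in\Cend_n$ for a fixed $j$; a direct evaluation gives $y(g)(1\otimes e_j)=h\otimes e_i=u$ for every $g\in G$, so $(\beta\cdot y)(e)(1\otimes e_j)=\beta u$ for any $\beta\in W_n$. Hence $\tilde\Phi(\alpha_k)u=0$ eventually, and such $u$ span $M_n$.

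For the reverse map $\Psi\mapsto\check\Psi$, continuity of $\Psi$ in the finite topology provides, for each $u\in M_n$, vectors $u_1,\dots,u_k\in M_n$ and a $\Bbbk$-linear map $L_u\colon M_n^k\to M_n$ with $\Psi(\alpha)u=L_u(\alpha u_1,\dots,\alpha u_k)$ for all $\alpha\in W_n$; since each $g\mapsto a(g)u_j$ is regular, so is $g\mapsto\Psi(a(g))u$, giving local regularity of $\check\Psi(a)$. Multiplicativity of $\Psi$ together with \eqref{eq:prod} shows $\check\Psi$ preserves each operation $(\oo{g})$, and $H$-linearity follows from \eqref{Wmod1}. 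The crux is T-invariance of $\check\Psi(a)$. $H$-invariance of $\Psi$ forces preservation of the $G$-grading $W_n=\bigoplus_g W_n^g$, where $W_n^g=\{a(g):a\in\Cend_n\}$, so $\Psi$ restricts to an algebra automorphism of $W_n^e\simeq\mathbb M_n(H)=\End_H M_n$ and hence of its center $\Gamma(H)\simeq H$, inducing an algebra automorphism $\sigma$ of $H$ with $\Psi(\Gamma(h))=\Gamma(\sigma(h))$. Applying $\Psi$ to $L_g\Gamma(h)=\Gamma(L_gh)L_g$ and using that $\Psi(L_g)\in W_n^g$ is itself T-invariant (being of the form $b(g)$ for some $b\in\Cend_n$) produces $\Gamma(\sigma(L_gh)-L_g\sigma(h))\Psi(L_g)=0$. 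Since $\Psi$ is bijective, $\Psi(L_g)$ is invertible, so $\sigma\circ L_g=L_g\circ\sigma$ for every $g\in G$; pushing the identity $a(g)\Gamma(\sigma^{-1}(h))=\Gamma(L_g\sigma^{-1}(h))a(g)$ through $\Psi$ now yields precisely $\Psi(a(g))\Gamma(h)=\Gamma(L_gh)\Psi(a(g))$, which is T-invariance of $\check\Psi(a)$. Bijectivity of $\check\Psi$ comes from applying the same construction to $\Psi^{-1}$.

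The two constructions are mutually inverse by design, since an element of $\Cend_n$ is uniquely determined by the function $g\mapsto a(g)$; group compatibility is immediate from the definitions. The principal obstacle is the T-invariance step in the reverse direction: one must extract from the $H$-invariance of $\Psi$ that the induced algebra automorphism $\sigma$ of $\Gamma(H)$ commutes with every left shift, and then use this commutation to transport T-invariance from $a$ to $\Psi\circ a(\cdot)$.
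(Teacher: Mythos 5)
Your proposal is correct and follows the paper's overall strategy (the two mutually inverse assignments $\Phi\mapsto\tilde\Phi$, $\Psi\mapsto\check\Psi$, with the forward direction handled exactly as in the paper via Lemma~\ref{lem:W}, \eqref{eq:prod} and Corollary~\ref{lem:conv}), but the two delicate points of the reverse direction are settled by genuinely different arguments. For local regularity of $\check\Psi(a)$ the paper invokes density of $W_n$ in $\End M_n$ (Corollary~\ref{cor:Wdense}) to extend $\theta$ to the operators $\tilde a(\xi)$ and then applies Lemma~\ref{loc-cont}; your factorization $\Psi(\alpha)u=L_u(\alpha u_1,\dots,\alpha u_k)$ extracted directly from continuity in the finite topology is more elementary and bypasses the density theorem entirely. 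For T-invariance the paper runs a short Sweedler computation based on the identity $\alpha\,\Gamma(h)=\Gamma(h_{(2)})(h_{(-1)}\cdot\alpha)$; you instead observe that $W_n^g$ is exactly the $\chi_g$-eigenspace of the dot action ($\chi_g(h)=h(g^{-1})$, and Lemma~\ref{lem:W}(2) plus the fact that $H$ separates points of $G$ show the eigenspaces are precisely the graded components), so $H$-invariance forces $\Psi(W_n^g)\subseteq W_n^g$. Note, however, that this observation already finishes the proof: any element of $W_n^g$ equals $b(g)$ for some $b\in\Cend_n$ and is therefore T-invariant at $g$ by \eqref{TinvCend} --- which is exactly the fact you yourself use for $\Psi(L_g)$. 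Hence your entire subsequent detour through the center $\Gamma(H)$ of $W_n^e$, the induced automorphism $\sigma$ of $H$, and its commutation with left shifts is correct but redundant (and its one small gap, the invertibility of $\Psi(L_g)$, which requires first checking $\Psi(\idd)=\Psi(L_e)=\idd$, never needs to be filled). Both packagings rest on the same underlying fact that $H$-invariance pins down the ``degree'' $g$ of $\Psi(a(g))$; yours is more structural, the paper's more computational.
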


\begin{proof}
Let $\Theta $ be an automorphism of the conformal algebra $\Cend_n$,
i.e., a bijective $H$-invariant map that preserves all operations
$(\cdot\oo{g}\cdot)$, $g\in G$.
Define $\theta : W_n\to W_n$ by the rule
\begin{equation}\label{eq:theta}
\theta (a(g)) = \Theta (a)(g),\quad g\in G, a\in \Cend_n.
\end{equation}
If $a(g)=0$ then $a\oo{g}\Cend_n=0$, therefore,
$\Theta(a)\oo{g} \Cend_n =0$ and $\theta(a(g))=\Theta(a)(g)=0$
by Lemma~\ref{lem:W}(1).
It follows from Lemma~\ref{lem:W}(2) that
$\theta $ is well-defined on $W_n$ by linearity.

This is straightforward to check that
$\theta $ is an $H$-invariant map:
\[
 \theta (h\cdot a(g)) = \theta (h(g^{-1})a(g)) = h(g^{-1})\Theta (a)(g)
=h\cdot (\Theta(a)(g)) = h\cdot \theta (a(g)).
\]

To prove the continuity of $\theta $, it is enough to show that for
an arbitrary converging sequence
$\alpha _k\to \alpha \in W_n$ its image $\{\theta (\alpha _k)\}_{k\ge 0}$
converges to  $\theta (\alpha )$ in~$W_n$.
Indeed, the definition of $\theta $ implies
\[
  \theta (\alpha _k)\cdot \Theta(a) = \Theta(\alpha _k\cdot a),
\quad   \theta (\alpha )\cdot \Theta(a) = \Theta(\alpha \cdot a),
\quad a\in \Cend_n,\ k\ge 0.
\]
Since $\Theta $ is surjective, $\theta (\alpha _k)\cdot a = \theta (\alpha) \cdot a$
for every $a\in \Cend_n$, $k\gg 0$, by Lemma~\ref{lem:Wmod}(2).
For the elements of the form
$a=(1\otimes f_j\otimes a_j)$, $f_j\in H$, $a_j\in \mathbb M_n(\Bbbk)$,
one may compute
\[
 (\theta (\alpha _k)\cdot a)(z)(1\otimes v)
 = \theta (\alpha _k)(f_j\otimes a_jv) = 0,
\quad v\in \Bbbk^n,               \ k\gg 0,
\]
therefore, the sequence $\theta (\alpha _k)$ converges to $\theta (\alpha )$.

The map $\theta $ has an inverse one that can be easily constructed
from $\Theta^{-1}$.
Hence, $\theta $ is bijective, and its inverse is also continuous.

It is easy to check that all products are preserved:
\begin{multline}\label{eq:Tinv-check}
\theta (a(g)b(z)) = \theta ((a\oo{g} b)(zg))
=\Theta(a\oo{g} b)(zg) = (\Theta(a)\oo{g}\Theta(b))(zg) \\
=\Theta(a)(g)\Theta(b)(z)=\theta (a(g))\theta (b(z)).
\end{multline}

Therefore, an arbitrary automorphism $\Theta $ of the conformal algebra
$\Cend_n$ gives rise to a continuous $H$-invariant automorphism
$\theta $ of the algebra~$W_n$. It follows from \eqref{eq:theta}
that the map $\Theta\mapsto \theta $ is a group homomorphism.

Conversely, let $\theta $ be a continuous $H$-invariant automorphism of $W_n$.
Consider the map $\Theta: \Cend_n \to \Cend_n$
built by the following rule: for an arbitrary $a\in \Cend_n$
set
$\Theta(a): g\mapsto \theta (a(g))$, $g\in G$.

To complete the proof, it is enough to show that
$\Theta(a)\in \Cend_n$ and $\Theta $ is an automorphism of $\Cend_n$.
For every $h\in H$, $g\in G$ we have
\begin{multline} \nonumber
\Theta (a)(g) h =\theta (a(g))h  = h_{(2)}(h_{(-1)}\cdot \theta (a(g)))
=h_{(2)}\theta (h_{(-1)}\cdot a(g)) \\
= h_{(2)}h_{(1)}(g) \theta (a(g))= L_gh \Theta (a)(g).
\end{multline}
Hence, $\Theta(a)$ is T-invariant.
By Corollary~\ref{cor:Wdense}, the algebra $W_n$
is dense in $\End M_n$. Therefore, $\theta $
can be expanded, in particular, to the elements
$\tilde a(\xi )\in \End M_n$, $\xi\in H^*$.
By Lemma~\ref{loc-cont}(1), the map $\xi \mapsto \tilde a(\xi )$
is continuous, hence, $\theta \tilde a$ is also continuous.
By Lemma~\ref{loc-cont}(2) there exists a locally regular map
$a_1: G\to \End M_n$ such that $\tilde a_1=\theta \tilde a$.
It is clear that $a_1 =\Theta(a)$, hence, $\Theta(a)\in \Cend_n$.

It follows from \eqref{eq:Tinv-check} that $\Theta $
preserves $(\cdot\oo{g}\cdot)$, $g\in G$,
The continuity of $\theta ^{-1}$ implies the existence of $\Theta^{-1}$
\end{proof}

If $C$ is a conformal subalgebra of $\Cend_n$
then by $W_n(C)$ we denote
$\Spann_{\Bbbk} \{a(g)\mid a\in C,\, g\in G\}\subseteq W_n$.
It is clear that $W_n(C)$ is a subalgebra of~$W_n$.

\begin{defn}\label{defn:irred}
A conformal subalgebra $C\subseteq \Cend_n$ is said to be {\em irreducible\/}
if $M_n$ contains no $W_n(C)$-invariant $H$-submodules except for
$\{0\}$ and $M_n$.
\end{defn}

Recall that a left (right) ideal of an algebra is essential
(see, e.g., \cite{Faith}) if it has a nonzero intersection with
every nonzero left (right) ideal of the algebra.

\begin{lem}\label{ess-irr}
A conformal subalgebra of the form
$\mathcal F(H\otimes B_0)$
of $\Cend_n$, where $B_0$ is a left ideal of $\mathbb M_n(H)$,
is irreducible if and only if
$B_0$ is essential.
\end{lem}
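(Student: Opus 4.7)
The plan is to compute the operator algebra $W_n(C)$ explicitly and then reduce the irreducibility question to a Morita-theoretic property of $B_0$. (Note that $C$ is a left ideal of $\Cend_n$ by Proposition~\ref{prop:Ideals}(2).) From the $\Diff$-product \eqref{eqGprod-tmp} and the definition of $\mathcal F$, one verifies that for $f\in H$ and $b\in B_0$ the operator $\mathcal F(f\otimes b)(g)$ on $M_n = H\otimes\Bbbk^n$ equals $f(g^{-1})\,L_g\,b$, where $L_g$ is the left shift on the $H$-factor and $b\in\mathbb M_n(H)$ acts by matrix multiplication. Varying $f$ and $g$ independently, we obtain
\[
 W_n(C) = \Spann_\Bbbk\{L_g\circ b : g\in G,\ b\in B_0\}.
\]

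The core step is to determine the orbit $W_n(C)u$ for an arbitrary $u\in M_n$. Via the Morita correspondence between left ideals of $\mathbb M_n(H)$ and $H$-submodules of $H^n$ through row spaces, write $B_0 = \{c\in\mathbb M_n(H):$ every row of $c$ lies in $R_0\}$ for a uniquely determined $H$-submodule $R_0\subseteq H^n$, with rows chosen independently. A direct matrix-vector calculation then yields $B_0 u = I_u^n$, where $I_u := \{\sum_i r_i u_i : r=(r_1,\dots,r_n)\in R_0\}\subseteq H$ is an ideal, and hence
\[
 W_n(C)u = \textstyle\sum_{g\in G} L_g(I_u^n) = \bigl(\sum_{g\in G} L_g I_u\bigr)^n.
\]
Because $G$ acts transitively on itself, the only $L_G$-invariant ideals of $H = \Bbbk[G]$ are $\{0\}$ and $H$; so $W_n(C)u$ is either $\{0\}$ (precisely when $u$ lies in $R_0^\perp := \{v\in M_n : \sum_i r_i v_i = 0\ \text{for all}\ r\in R_0\}$) or the whole of $M_n$. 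Consequently the $W_n(C)$-invariant $H$-submodules of $M_n$ are exactly the $H$-submodules contained in $R_0^\perp$ together with $M_n$ itself, and $C$ is irreducible if and only if $R_0^\perp = \{0\}$.

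Finally, I would identify $R_0^\perp = \{0\}$ with essentiality of $B_0$. Since $H = \Bbbk[G]$ is reduced and Noetherian, passing to the total ring of fractions $K$ of $H$ yields the standard equivalences $R_0^\perp = 0 \iff R_0\otimes_H K = K^n \iff R_0$ meets every nonzero $H$-submodule of $H^n$; and via the Morita correspondence the last condition is exactly that $B_0$ meets every nonzero left ideal of $\mathbb M_n(H)$, i.e.\ $B_0$ is essential. The anticipated main obstacle is the dichotomy $W_n(C)u\in\{\{0\},M_n\}$, which hinges on both the clean matrix-vector formula $B_0u = I_u^n$ and the transitivity of $G$ on itself, which collapses $\sum_g L_g I_u$ to $\{0\}$ or $H$; the final equivalence between $R_0^\perp = 0$ and essentiality of $B_0$ is then a routine commutative-algebra verification.
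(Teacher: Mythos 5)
Your argument is correct, but it takes a genuinely different route from the paper's. The paper handles the two implications separately via noncommutative Goldie theory for $\mathbb M_n(H)$: for ``essential $\Rightarrow$ irreducible'' it extracts a regular (non-zero-divisor) element $a\in B_0$ and gets $W_n(C)u\supseteq W_n(au)=M_n$ from the irreducibility of the full operator algebra $W_n$ (Proposition~\ref{prop:irred}) combined with the left-ideal property of $C$; for the converse it passes to the classical quotient ring $Q(\mathbb M_n(H))$ via the Ore condition, shows a non-essential $B_0$ has a nonzero right annihilator, and exhibits the annihilated $H$-submodule as $W_n(C)$-invariant. You instead compute the orbit $W_n(C)u$ in closed form through the row-module $R_0$ and derive the dichotomy $W_n(C)u\in\{0,M_n\}$ from the absence of proper nonzero $L_G$-invariant ideals of $H$ (the same fact underlying Proposition~\ref{prop:Ideals}(3)); the obstruction to irreducibility is then visibly the submodule $R_0^\perp$, and its vanishing is tied to essentiality by localizing the commutative ring $H$ at its non-zero-divisors rather than by Ore-localizing $\mathbb M_n(H)$. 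Both of your key computational inputs check out: the operator formula $\mathcal F(f\otimes b)(g)=f(g^{-1})\,L_g\circ b$ follows from the explicit form of $\Phi^{-1}$ together with $(L_gh)L_g=L_g\circ(h\cdot{})$, and the row-space parametrization of left ideals of $\mathbb M_n(H)$ is valid because left multiplication by matrix units lets one extract and recombine rows independently. What your route buys is explicitness and purely commutative algebra (you bypass Proposition~\ref{prop:irred}, the density theorem, and the noncommutative Goldie machinery); what the paper's route buys is brevity given that machinery and an argument that does not depend on the concrete row description of $B_0$. I would only ask you to write out the ``routine'' final equivalences (that $R_0^\perp=0$, $KR_0=K^n$, and essentiality of $R_0$ in $H^n$ coincide over the total quotient ring $K$ of the reduced Noetherian ring $H$), since that is where the reducedness of $\Bbbk[G]$ is actually used.
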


\begin{proof}
Suppose $B_0$ is an essential left ideal.
Then the Goldie theory implies
(see, e.g., \cite{Faith}) that $B_0$ contains a matrix $a$
which is not a zero divisor in $\mathbb M_n(H)$.
Let $a=\sum\limits_{i} f_i\otimes a_i$,
$f_i\in H$, $a_i\in \mathbb M_n(\Bbbk )$.
The element
$x=\sum\limits_{i}\mathcal F(1\otimes f_i\otimes a_i)$
belongs to $C=\mathcal F(H\otimes B_0)$, and it follows from \eqref{eq:prod}
that $W_nx(z)\subseteq W_n(C)$ for every $z\in G$.
If $0\ne u=\sum\limits_{k=1}^{n}h_k\otimes e_k\in M_n$ then
\[
  x(z)u = \sum\limits_{k=1}^{n} \sum\limits_{i} L_z(f_ih_k)\otimes a_ie_k,
\]
and for $z=e $ we obtain $x(e)u = a u\ne 0$
since $a$ is not a zero divisor.
Therefore, $W_n(C)u\supseteq W_n x(e)u = W_n au = M_n$
by Proposition~\ref{prop:irred}, i.e.,
$M_n$ does not contain even a $W_n(C)$-invariant subspace.

Conversely, suppose $C=\mathcal F(H\otimes B_0)$ is an irreducible subalgebra of
$\Cend_n$.
Since $H$ is a semiprime Noetherian commutative algebra,
the matrix algebra $\mathbb M_n(H)$ is semiprime left and right Noetherian.
In particular, by the Goldie theory
$\mathbb M_n(H)$ is left and right Ore, and its classical (left) quotient algebra
$Q=Q(\mathbb M_n(H))$
is semisimple Artinian.

Let $R$ stands for the set of all elements of $\mathbb M_n(H)$ invertible in
$Q=R^{-1}\mathbb M_n(H)$.
Assume that $B_0$ is not essential. Then
$R^{-1}B_0$ is a proper left ideal of $Q$
(else $B_0$ contains an element of $R$, therefore, $B_0$ is essential).
By the Wedderburn---Artin Theorem,
$Q$ is a direct sum of full matrix rings over division algebras.
Then $R^{-1}B_0$ has a nonzero right annihilator $\ann_r (R^{-1}B_0)$ in $Q$.
If $0\ne r^{-1}a\in \ann_r (R^{-1}B_0)$,
$r\in R$, $a\in \mathbb M_n(H)$,
then the right Ore condition implies
$r^{-1}a = bs^{-1}$,
$s\in R$, $0\ne b\in \mathbb M_n(H)$.
It is clear that $B_0b =0$, therefore, $B_0$
has a nonzero right annihilator in
$\mathbb M_n(H)$.

Denote by $M'$ the $H$-submodule of $M_n$
generated by all $u\in M_n$ such that $B_0u=0$.
We have shown $M'\ne 0$, but for every
$a\in B_0$, $h\in H$, $z\in G$, $u\in M'$
we have
\[
 \mathcal F(h\otimes a) (z)u
= (ha_{(-1)})(z^{-1}) a_{(2)}L_z u
= h(z^{-1})L_z(au) =0.
\]
Hence, $M'$ is a $W_n(C)$-invariant $H$-submodule.
This is a contradiction with irreducibility of~$C$.
\end{proof}

\begin{thm}\label{thm:irred}
Let $C$ be an irreducible subalgebra of $\Cend_n $.
Then $C_1=(1\otimes H\otimes E)C$
is a left ideal of~$\Cend_n$,
$C_1 = \mathcal F(H\otimes B_0)$,
where $B_0$ is an essential left ideal of~$\mathbb M_n(H)$.
\end{thm}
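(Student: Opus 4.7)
\medskip

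The theorem asserts three things: $C_1$ is a left ideal of $\Cend_n$, it has the form $\mathcal F(H\otimes B_0)$ for some left ideal $B_0$ of $\mathbb M_n(H)$, and $B_0$ is essential. The second assertion will follow from the first by Proposition \ref{prop:Ideals}(2). For the third, note that $C\subseteq C_1$ (take $h=1\in H$), so $W_n(C)\subseteq W_n(C_1)$ still acts irreducibly on $M_n$; that is, $C_1$ itself is irreducible, and Lemma \ref{ess-irr} then forces $B_0$ to be essential. Everything therefore reduces to proving the left-ideal property, and my plan is to obtain it from the density theorem (Theorem \ref{thm:Wirred}) applied to $W_n(C_1)$, then to transfer the resulting density to $\Cend_n$ via the continuity of the $W_n$-action.

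The first step is to compute $W_n(C_1)$. Direct evaluation using the isomorphism with $\Diff(\mathbb M_n(H),\Delta\otimes\idd)$ gives $((1\otimes h\otimes E)c)(\gamma)=\Gamma(h)c(\gamma)$, where $\Gamma(h)\in\End M_n$ is multiplication by $h$; hence $W_n(C_1)=\Gamma(H)W_n(C)$. The T-invariance relation \eqref{TinvCend} yields the commutation $c(g)\Gamma(h)=\Gamma(L_gh)c(g)$, and together with $c(g)c'(z)=(c\oo{g}c')(zg)\in W_n(C)$ (which holds because $C$ is a conformal subalgebra) it produces
\begin{equation*}
\Gamma(h_1)c(g)\cdot\Gamma(h_2)c'(z)=\Gamma(h_1 L_g h_2)(c\oo{g}c')(zg)\in\Gamma(H)W_n(C).
\end{equation*}
Thus $W_n(C_1)$ is a subalgebra of $W_n$ that is stable under left and right multiplication by $\Gamma(H)$, and since it inherits irreducibility from $W_n(C)$, Theorem \ref{thm:Wirred} yields density of $W_n(C_1)$ in $\End M_n$. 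The very same calculation at the conformal level, $c_1\oo{g}c_1'=(1\otimes h_1 L_g h_2\otimes E)(c\oo{g}c')\in C_1$, shows that $C_1$ is itself a conformal subalgebra of $\Cend_n$.

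For the left-ideal step I take $x\in\Cend_n$, $\gamma\in G$, $c_1\in C_1$ and use density to choose a sequence $\alpha_k\in W_n(C_1)$ converging to $x(\gamma)$ in the finite topology on $\End M_n$. Corollary \ref{lem:conv}(2) then gives $\alpha_k\cdot c_1=x(\gamma)\cdot c_1=x\oo{\gamma}c_1$ for $k\gg 0$. Writing each $\alpha_k=\sum_j a_{k,j}(g_{k,j})$ with $a_{k,j}\in C_1$, every summand $a_{k,j}\oo{g_{k,j}}c_1$ lies in $C_1$ because $C_1$ is a conformal subalgebra, and so $x\oo{\gamma}c_1\in C_1$. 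The main obstacle is precisely this last transfer: density holds at the associative-algebra level in $\End M_n$, whereas I need membership in a specific $H$-submodule $C_1\subseteq\Cend_n$; the bridge is the continuity of the $W_n$-module structure on $\Cend_n$ (Lemma \ref{lem:Wmod}, Corollary \ref{lem:conv}) combined with the conformal closure of $C_1$, which ensures that each approximant acts into $C_1$. Proposition \ref{prop:Ideals}(2) and Lemma \ref{ess-irr} then complete the proof as indicated above.
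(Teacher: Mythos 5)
Your proof is correct and follows essentially the same route as the paper's: both identify $W_n(C_1)=HW_n(C)$ as an irreducibly acting subalgebra stable under $\Gamma(H)$ on both sides, apply Theorem \ref{thm:Wirred} to get density in $\End M_n$, transfer the density to the left-ideal property of $C_1$ via the continuity of the $W_n$-action on $\Cend_n$ (Lemma \ref{lem:Wmod}(2), Corollary \ref{lem:conv}), and conclude with Proposition \ref{prop:Ideals} and Lemma \ref{ess-irr}.
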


\begin{proof}
Let $S = W_n(C)$. Then
$S_1 = HS $ is also a subalgebra of $W_n$ since
$a(g)h  = L_gh a(g)$ for $a\in C$, $g\in G$, $h\in H$.
It is also clear that
$C_1=(1\otimes H\otimes E)C$ is a conformal subalgebra of $\Cend_n$,
and $S_1=W_n(C_1)$.

If $C$ is irreducible in the sense of Definition~\ref{defn:irred}
then $S_1=HS$ acts irreducibly on $M_n$.

By Theorem \ref{thm:Wirred}, $S_1$ is a dense subalgebra of
$\End M_n$. By Lemma~\ref{lem:Wmod}(2), the conformal algebra $C_1$
can be considered as a topological left $S_1$-module.

Suppose $\{\alpha_k\}_{k\ge 0}$ is a sequence in $S_1$
which converges to $\alpha \in W_n$ as $k\to \infty $.
Then, by Corollary~\ref{lem:conv}, for every
$a\in \Cend_n$
there exists a natural $m$ such that
$\alpha_k\cdot a =\alpha\cdot a$ for all $k\ge m$.

Hence, for all $a\in C_1$, $\alpha =b(g)$, $b\in \Cend_n$, $g\in G$,
we have $C_1\ni \alpha _k\cdot a = \alpha \cdot a = (b\oo{g} a)$.
Therefore, $C_1$ is a left ideal of $\Cend_n$.
Proposition~\ref{prop:Ideals} implies
$C_1= \mathcal F(H\otimes B_0)$,
where $B_0$ is a left ideal of $\mathbb M_n(H)$
which is essential by Lemma~\ref{ess-irr}.
\end{proof}

If $G=\{e\}$ then Theorem \ref{thm:irred} turns into the Burnside Theorem.
If $G=\mathbb A_1$ then essential ideals of $\mathbb M_n(H)$
are of the form $\mathbb M_n(H)Q$, $\det Q\ne 0$.
The following statement is a corollary of Theorem~\ref{thm:irred}.

\begin{thm}[\cite{BKL, Ko1}]
Let $C\subseteq \Cend_n$ is an irreducible conformal subalgebra
(over $\mathbb A^1$, $\mathrm{char}\,\Bbbk =0$). Then either
$C=\Cend_{n,Q}:=\mathcal F(H\otimes \mathbb M_n(H)Q)$,
$\det Q\ne 0$,
or
$C=(1\otimes P^{-1})(H\otimes 1\otimes \mathbb M_n(\Bbbk))
(\mathcal F(1\otimes P))$,
where
$P\in \mathbb M_n(H)$, $\det P\in \Bbbk\setminus\{0\}$.
\end{thm}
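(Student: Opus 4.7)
The plan is to invoke Theorem~\ref{thm:irred} as the main input and specialize to the case $H=\Bbbk[x]$, then split according to whether $C$ coincides with its enrichment $C_1 := (1\otimes H\otimes E)C$ or is a proper subalgebra of it.

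First I would apply Theorem~\ref{thm:irred} directly: since $C$ is an irreducible conformal subalgebra of $\Cend_n$, the enrichment $C_1$ is a left ideal of $\Cend_n$ of the form $\mathcal F(H\otimes B_0)$, where $B_0$ is an essential left ideal of $\mathbb M_n(H)$. Because $H=\Bbbk[x]$ is a principal ideal domain, every left ideal of $\mathbb M_n(H)$ is principal (by Smith normal form applied to a generating matrix), and essentiality amounts to requiring a generator $Q$ with $\det Q\ne 0$: indeed $\mathbb M_n(H)Q$ contains $\det Q\cdot \mathbb M_n(H)$ via the adjugate identity, while a singular $Q$ would give an ideal with nonzero right annihilator. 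Hence $C_1 = \mathcal F(H\otimes\mathbb M_n(H)Q) = \Cend_{n,Q}$.

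If $C = C_1$, we land in the first alternative. Otherwise $C$ is a proper conformal subalgebra of $\Cend_{n,Q}$ satisfying $(1\otimes H\otimes E)C = C_1$. The plan here is to argue that such a $C$ must be finitely generated of rank exactly $n^2$ over $H$; given this, a change of $H$-basis realises $C$ as $(1\otimes P^{-1})(H\otimes 1\otimes \mathbb M_n(\Bbbk))\mathcal F(1\otimes P)$ for a suitable $P\in\mathbb M_n(H)$. The requirement $\det P\in\Bbbk\setminus\{0\}$ is then forced by irreducibility: if $\det P$ were a non-unit of $H$, one could exhibit a proper $H$-invariant submodule of $M_n$ preserved by $W_n(C)$, contradicting Definition~\ref{defn:irred}.

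The main obstacle is establishing this finite-rank dichotomy: ruling out irreducible subalgebras strictly between a current-type algebra and $\Cend_{n,Q}$, and producing $P$ explicitly with unit determinant. This step does not follow formally from Theorem~\ref{thm:irred} and must reproduce the finite-type analysis of~\cite{BKL, Ko1}, which exploits the Noetherianity of $\mathbb M_n(\Bbbk[x])$ together with a detailed examination of the $H$-module structure of $C\subseteq \Cend_{n,Q}$; the characteristic-zero assumption enters here through the classification of finitely generated conformal subalgebras of $\Cend_n$.
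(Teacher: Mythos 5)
Your proposal matches the paper's own treatment of this statement: the paper likewise derives it as a corollary of Theorem~\ref{thm:irred}, remarking only that for $G=\mathbb A^1$ the essential left ideals of $\mathbb M_n(\Bbbk[x])$ are exactly those of the form $\mathbb M_n(H)Q$ with $\det Q\ne 0$, and it defers the dichotomy between $C=C_1$ and the conjugated current case to \cite{BKL, Ko1} just as you do. The step you flag as the main obstacle is not proved in this paper either, and your explicit justification of the essential-ideal classification (Smith normal form plus the adjugate identity, and the right-annihilator argument for singular $Q$) is correct and fills in what the paper leaves as a one-line remark.
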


Theorem \ref{thm:irred} is valid not only for connected groups.
Let us consider the case of ``intermediate complexity'' between
trivial group and affine line: when $G$ is a finite group.
In this case
$H=\Bbbk[G]\simeq (\Bbbk G)^*$, where $\Bbbk G$ is the group algebra of
$G$ considered as a Hopf algebra in the ordinary way.

\begin{exmp}
Let $G_1$ be a subgroup of a finite group $G$, $V=G/G_1$.
Then $C_{G_1}=\Cend_n^{(G,V)}$ is an irreducible conformal subalgebra
of $\Cend^{(G,G)}_n$.
\end{exmp}

\begin{thm}\label{thm:finitecase}
Let $C$ be an irreducible conformal subalgebra of
$\Cend_n^{(G,G)}$, $|G|<\infty $.
Then there exists an automorphism
$\theta $  of the conformal algebra $\Cend_n$
such that
$\theta(C)=C_{G_1,\chi }$,
\[
C_{G_1,\chi} =\left\{
\sum\limits_{g\in G}T_g\otimes \sum\limits_{k=1}^p\sum\limits_{\alpha\in G_k}
T_\alpha \otimes \chi (g,\alpha) a_{g,k} \mid a_{g,k}\in \mathbb M_n(\Bbbk )
\right\},
\]
where $G_1$ is a subgroup of $G$,
$\{G_1,\dots, G_p\}=\{gG_1\mid g\in G\}$,
and
$\chi: G\times G \to \Bbbk ^*$
is a function satisfying the following condition:
for every
$g,h\in G$, $k\in \{1,\dots, p\}$
the value
\[
\frac{\chi (g,\gamma )\chi(h,g^{-1}\gamma )}{\chi(gh, \gamma)} \in \Bbbk^*
\]
does not depend on the choice of a representative
$\gamma \in G_k$.
\end{thm}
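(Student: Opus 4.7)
The plan is to combine Theorem~\ref{thm:irred} with the semisimplicity peculiar to finite $G$. Since $|G|<\infty$ and $\Bbbk$ is algebraically closed, $H=\Bbbk[G]\cong\bigoplus_{g\in G}\Bbbk T_g$ is a commutative semisimple finite-dimensional algebra, so $\mathbb{M}_n(H)\cong\prod_{g\in G}\mathbb{M}_n(\Bbbk)$ is semisimple Artinian; in such a ring every left ideal is a direct summand, so the only essential left ideal is the whole ring. Hence the essential ideal $B_0$ provided by Theorem~\ref{thm:irred} equals $\mathbb{M}_n(H)$, and $C_1=(1\otimes H\otimes E)C=\Cend_n$. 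Writing $\Cend_n=H\otimes H\otimes\mathbb{M}_n(\Bbbk)$ in the basis $\{T_g\otimes T_\alpha\otimes E_{ij}\}$ and decomposing $C=\bigoplus_g T_g\otimes C^{(g)}$ via the $H$-module structure, the equality $C_1=\Cend_n$ translates into surjectivity of every projection $\mathrm{pr}_\alpha\colon C^{(g)}\to\mathbb{M}_n(\Bbbk)$.

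A direct computation from \eqref{eqGprod-tmp} shows that $C^{(e)}$ is a unital subalgebra of $\prod_\alpha\mathbb{M}_n(\Bbbk)$ under pointwise multiplication. Its Jacobson radical lies in $\bigcap_\alpha\ker\mathrm{pr}_\alpha=0$, so $C^{(e)}$ is semisimple; Artin--Wedderburn yields $C^{(e)}\cong\prod_{k=1}^p\mathbb{M}_n(\Bbbk)$, together with a partition $G=G_1\sqcup\cdots\sqcup G_p$ in which $\alpha\in G_k$ iff $\mathrm{pr}_\alpha$ factors through the $k$-th Wedderburn factor. For $\alpha,\beta\in G_k$ the two surjections $\mathrm{pr}_\alpha,\mathrm{pr}_\beta$ differ by an algebra automorphism of $\mathbb{M}_n(\Bbbk)$, inner by Skolem--Noether. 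Theorem~\ref{thm:auto} lets us realise a block-diagonal conjugation $\mathrm{Ad}(\bigoplus_\alpha U_\alpha)$ as a conformal automorphism (it manifestly preserves the $G$-grading of $W_n=\End M_n$); choosing the $U_\alpha$ to undo these inner twists, we replace $C$ by its image and assume $C^{(e)}|_{G_k}=\{\sum_{\alpha\in G_k}T_\alpha\otimes a:a\in\mathbb{M}_n(\Bbbk)\}$, so $C^{(e)}$ contains the orthogonal idempotents $e_k=\sum_{\alpha\in G_k}T_\alpha\otimes I$.

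Multiplication by the $e_k$ via $\oo{e}$ and $\oo{g^{-1}}$ decomposes every $C^{(g)}=\bigoplus_k V_k^{(g)}$ with $V_k^{(g)}$ supported on $G_k$, and endows $V_k^{(g)}$ with commuting left and right $\mathbb{M}_n(\Bbbk)$-actions. Parameterising $c\in V_k^{(g)}$ by $c_\alpha=\sum_i\mu_i^{k,g,\alpha}a_i$ and demanding that the pairing $V_k^{(g)}\oo{g^{-1}}V_\ell^{(g^{-1})}\subseteq C^{(e)}$ land in the normalised (piecewise-constant) form of $C^{(e)}$ forces every coefficient $\mu_i^{k,g,\alpha}\nu_j^{\ell,g^{-1},g^{-1}\alpha}$ to be $\alpha$-independent, and thus every ratio $\mu_i^{k,g,\alpha}/\mu_{i'}^{k,g,\alpha}$ to be constant in $\alpha$; so the bimodule rank is one and $V_k^{(g)}=\{\sum_{\alpha\in G_k}T_\alpha\otimes\chi(g,\alpha)a:a\in\mathbb{M}_n(\Bbbk)\}$ for nowhere-zero scalars $\chi(g,\alpha)$. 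The same support analysis shows $G_k\cap gG_\ell$ is a union of classes of uniform size, so $gG_k$ is a single class $G_m$; consequently $\{G_k\}$ is preserved by left multiplication, and the class $G_1$ containing $e$ is a subgroup whose left cosets are the $G_k$.

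Finally, for $c\in V_k^{(g)}$ and $c'\in V_\ell^{(h)}$ with $gG_\ell=G_k$, the $\gamma$-component of $(T_g\otimes c)\oo{g^{-1}}(T_h\otimes c')\in V_k^{(gh)}$ equals $\chi(g,\gamma)\chi(h,g^{-1}\gamma)aa'$ for $\gamma\in G_k$, and the required form $\chi(gh,\gamma)a''$ with $a''$ independent of $\gamma$ forces $\chi(g,\gamma)\chi(h,g^{-1}\gamma)/\chi(gh,\gamma)$ to be constant on each coset --- exactly the cocycle condition in the statement. The principal obstacles are the bimodule-rank-one reduction for the $V_k^{(g)}$ and the identification of $\{G_k\}$ as the coset partition of a subgroup; the normalised $C^{(e)}$ together with the systematic pairing of $C^{(g)}$ with $C^{(g^{-1})}$ through $V_k^{(e)}\cong\mathbb{M}_n(\Bbbk)$ are the key tools.
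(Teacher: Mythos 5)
Your proposal is correct and follows essentially the same route as the paper's proof: reduce via Theorem~\ref{thm:irred} and semisimplicity of $\mathbb M_n(H)$ to surjectivity of all projections $S_g\to\mathbb M_n(\Bbbk)$, deduce that $S_e$ is a semisimple subdirect sum of matrix algebras yielding the partition $\{G_k\}$, normalize $S_e$ by a conformal automorphism, identify the $G_k$ as cosets by a support argument, and extract the scalars $\chi(g,\gamma)$ and their coset-independence condition from \eqref{eq:graded}. The only notable (and harmless) differences are that you realize the normalizing automorphism as $\mathrm{Ad}\bigl(\bigoplus_\alpha U_\alpha\bigr)$ through Theorem~\ref{thm:auto} --- which on the $(g,\alpha)$ slot acts as $a\mapsto U_\alpha a U_{g^{-1}\alpha}^{-1}$, i.e.\ exactly the paper's explicit $\sigma_{g,\alpha}$ --- and that you spell out the bimodule-rank-one step, which the paper only indicates by reference to \eqref{eq:graded} and \eqref{eq:density}.
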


For example, if $\chi \equiv 1$ then the conformal algebra $\Cend_n^{G,G/G_1}$
is isomorphic to $C_{G_1,\chi }$.

\begin{proof}
Sine $H=\Spann\{T_g\mid g\in G\}$,
$T_g (\gamma) = \delta_{g,\gamma}$,
then
$T_gT_\gamma = \delta_{g,\gamma}T_g$,
and every conformal subalgebra $C$ of
$\Cend_n =H\otimes H\otimes \mathbb M_n(\Bbbk )$
can be presented as
$C=\bigoplus\limits_{g\in G}\Bbbk T_g\otimes S_g$,
where
$S_g =\{x\in H\otimes \mathbb M_n(\Bbbk)\mid T_g\otimes x \in C \}$.

It is easy to see that $S_e$ is a subalgebra of
$H\otimes \mathbb M_n(\Bbbk )$.
Moreover,
\begin{equation}\label{eq:graded}
 S_g (L_{g^{-1}} S_h) \subseteq S_{gh }
\end{equation}
for all $g,h \in G$.

Let us identify
$H\otimes \mathbb M_n(\Bbbk )$
with
$\bigoplus\limits_{\gamma \in G} \mathbb M_n(\Bbbk )$
and denote by $\pi_g$, $g\in G$,
the canonical projections
$S_e \to \mathbb M_n(\Bbbk )$.

If $C$ is irreducible then Theorem \ref{thm:irred} implies
\begin{equation}               \label{eq:density}
\sum\limits_{\gamma \in G}(T_\gamma \otimes 1)S_g = H\otimes \mathbb M_n(\Bbbk)
\end{equation}
for each $g\in G$.
Thus, $S_e$ is a subdirect sum of matrix algebras, hence,
$S=I_1\oplus \dots \oplus I_p$, where
$I_k\simeq \mathbb M_n(\Bbbk )$ are two-sided ideals of~$S_e$.
Denote
$G_k = \{g\in G \mid \pi_g(I_k)\ne 0 \}\subseteq G$, $k=1,\dots, p$.
Since $I_kI_l = 0$ for $k\ne l$, we have
$G_k\cap G_l =\emptyset$.
It follows from \eqref{eq:density} that
$G=\bigcup\limits_{1\le k\le p} G_k$.
Let us enumerate the sets $G_k$ in such a way that
$e\in G_1$.

The maps $\pi_g: I_k\to \mathbb M_n(\Bbbk )$,
$g\in G_k$, are isomorphisms. Therefore,
\begin{equation}\label{eq:Se}
S_e=\left\{
\sum\limits_{k=1}^p \sum\limits_{g\in G_k} T_g\otimes a_k^{\theta_g}
\mid
a_k\in \mathbb M_n(\Bbbk)\right\},
\end{equation}
where $\theta_g$, $g\in G$, are automorphisms of the algebra
$\mathbb M_n(\Bbbk )$.

Relation \eqref{eq:density} also implies that
for every $g,\gamma \in G$, $a\in \mathbb M_n(\Bbbk )$
there exists $x\in S_g$ such that
$T_\gamma x:=(T_\gamma \otimes 1)x= T_\gamma\otimes a$.

\begin{lem}
The set $G_1$ is a subgroup of $G$, and
$G/G_1=\{gG_1\mid g\in G\}=\{G_k\mid k=1,\dots, p \}$.
\end{lem}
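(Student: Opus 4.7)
The plan is to prove the stronger statement $gG_1 = G_{k(g)}$ for every $g\in G$, where $k(g)\in\{1,\ldots,p\}$ is the unique index with $g\in G_{k(g)}$. Granted this, taking $g\in G_1$ yields $gG_1 = G_1$, so $G_1$ is a finite set closed under multiplication and containing $e$, hence a subgroup; and varying $g$ identifies $\{gG_1 : g\in G\}$ with $\{G_1,\ldots,G_p\}$.

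Fix $g\in G$, $\beta\in G_1$, and set $k^* := k(g\beta)$. By density \eqref{eq:density} the projections $\pi_{g\beta}\colon S_g \to \mathbb{M}_n(\Bbbk)$ and $\pi_\beta\colon S_{g^{-1}} \to \mathbb{M}_n(\Bbbk)$ are surjective, so I pick $y_0\in S_g$ and $z_0\in S_{g^{-1}}$ with $\pi_{g\beta}(y_0) = \pi_\beta(z_0) = E$, the identity matrix. Writing $e_k^g := L_{g^{-1}}(e_k) = \sum_{\alpha\in gG_k} T_\alpha\otimes E$, the closures $S_e\cdot S_g\subseteq S_g$ and $S_g\cdot L_{g^{-1}}(S_e)\subseteq S_g$ (and their analogues for $S_{g^{-1}}$) allow me to form the sandwiches
\[
y := e_{k^*}\cdot y_0\cdot e_1^g \in S_g, \qquad z := e_1\cdot z_0\cdot e_{k^*}^{g^{-1}} \in S_{g^{-1}}.
\]
Componentwise matrix multiplication in $\mathbb M_n(H)$ preserves $\pi_{g\beta}(y) = E$ and $\pi_\beta(z) = E$, while confining $y$ to support $G_{k^*}\cap gG_1$ and $z$ to support $G_1\cap g^{-1}G_{k^*}$.

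The heart of the argument is to compute $(T_g\otimes y)\oo{g^{-1}}(T_{g^{-1}}\otimes z) = T_e\otimes y\cdot L_{g^{-1}}(z)$. The element $y\cdot L_{g^{-1}}(z)$ lies in $S_e$, has support contained in $G_{k^*}\cap gG_1\subseteq G_{k^*}$, and so must lie in the simple ideal $I_{k^*}$ (the $G_l$ are pairwise disjoint and each $I_l$ is supported in $G_l$). Componentwise multiplication gives $\pi_{g\beta}(y\cdot L_{g^{-1}}(z)) = \pi_{g\beta}(y)\cdot\pi_{\beta}(z) = E$. But every element of $I_{k^*}$ has the form $\sum_{\alpha\in G_{k^*}}T_\alpha\otimes a^{\theta_\alpha}$ for a unique $a\in\mathbb M_n(\Bbbk)$ by \eqref{eq:Se}, and the condition $a^{\theta_{g\beta}} = E$ forces $a = E$ because $\theta_{g\beta}$ is an algebra automorphism. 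Hence $y\cdot L_{g^{-1}}(z) = e_{k^*}$, whose support is the full block $G_{k^*}$; comparing with the support bound $G_{k^*}\cap gG_1$ yields $G_{k^*}\subseteq gG_1$.

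The reverse inclusion follows by repeating the construction for the pair $(g^{-1},\,g\beta)$, with $g\beta\in G_{k^*}$ playing the role previously held by $\beta\in G_1$ and with the idempotents $e_{k^*}$ and $e_1$ interchanged: since $g^{-1}(g\beta) = \beta\in G_1$, this delivers $G_1\subseteq g^{-1}G_{k^*}$, i.e., $gG_1\subseteq G_{k^*}$. Combining, $gG_1 = G_{k^*} = G_{k(g\beta)}$; the left-hand side is independent of $\beta\in G_1$, so the choice $\beta = e$ produces $gG_1 = G_{k(g)}$. The main technical obstacle will be ensuring that $y\cdot L_{g^{-1}}(z)$ lies in a single simple ideal $I_{k^*}$ rather than being spread across several $I_l$; this is exactly what the two-sided sandwich by $e_{k^*}$ achieves through the orthogonality $e_k e_l = \delta_{kl}\,e_k$ in $S_e$ and the componentwise product structure of $\mathbb M_n(H)$.
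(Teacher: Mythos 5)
Your proof is correct and uses the same core mechanism as the paper: surjectivity of the projections $\pi_\gamma$ on each $S_g$ (from \eqref{eq:density}), the graded products $S_g\,(L_{g^{-1}}S_{g^{-1}})\subseteq S_e$ from \eqref{eq:graded} to produce an element of $S_e$ with support trapped in an intersection of a block with a coset, and the rigidity of \eqref{eq:Se} forcing that support to be a full block. The only difference is packaging --- you argue directly for the two inclusions $G_{k^*}\subseteq gG_1\subseteq G_{k^*}$ via idempotent sandwiches, whereas the paper reaches the same conclusion by contradiction --- so this is essentially the paper's argument.
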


\begin{proof}
Assume there exist $k,m\in \{1,\dots, p\}$, $g\in G$
such that
$gG_k\cap G_m\ne \emptyset, G_m$.
Consider an arbitrary
$x=\sum\limits_{\gamma\in G} T_\gamma \otimes a_\gamma\in S_g$,
$a_{\gamma_0}\ne 0$ for some $\gamma_0\in gG_k\cap G_m$.
Then by
\eqref{eq:graded} and \eqref{eq:Se}, we have
\[
y_1=\sum\limits_{\gamma\in gG_k} T_\gamma \otimes a_\gamma \in S_g,
\quad
y_2 = \sum\limits_{\gamma\in gG_k\cap G_m}
T_\gamma \otimes a_\gamma \in S_g.
\]
Let us choose
$z \in S_{g^{-1}}$ in such a way that
$T_{\gamma_0}z=T_{\gamma_0}\otimes E$
and
$w=y_2L_{g^{-1}}z\in S_e$.
The element $w$ has the following properties:
$T_{\gamma_0}w = a_{\gamma_0}\ne 0$, $T_\gamma w =0$ for
$\gamma \in G_m\setminus gG_k$. This is a contradiction to~\eqref{eq:Se}.

Therefore, left multiplication by
$g\in G$ permutes the sets $\{G_1,\dots, G_p\}$.
\end{proof}

\begin{lem}
{\rm 1.}
Let $\sigma_{g,\alpha}\in \End \mathbb M_n(\Bbbk )$,
$g,\alpha \in G$, be a collection of bijective linear
transformations.
Define an $H$-linear map $\sigma $ by the rule
\begin{equation}\label{eq:auto-s}
\begin{aligned}
\sigma :{}& \Cend_n\to \Cend_n, \\
& T_g\otimes T_\alpha \otimes a \mapsto T_g\otimes T_\alpha\otimes
   a^{\sigma_{g,\alpha}},
\quad g,\alpha\in G,\ a\in \mathbb M_n(\Bbbk).
\end{aligned}
\end{equation}
The map $\sigma $ is an automorphism of the conformal algebra
$\Cend_n$ if and only if
$(ab)^{\sigma_{gh,\alpha}}
= a^{\sigma_{g,\alpha}}  b^{\sigma_{h,g^{-1}\alpha}}$
for all $a,b\in \mathbb M_n(\Bbbk)$, $g,h,\alpha\in G$.

{\rm 2.} For an arbitrary set
$\theta_\alpha$, $\alpha\in G$,
of automorphisms of the algebra
$\mathbb M_n(\Bbbk )$ there exists an automorphism
$\sigma $ of the form \eqref{eq:auto-s}
such that
$\sigma _{e,\alpha}=\theta_\alpha $.
\end{lem}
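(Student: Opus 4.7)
The plan is to reduce the entire lemma to a direct bookkeeping computation of the conformal products on the $\Bbbk$-basis $\{T_g\otimes T_\alpha\otimes a:g,\alpha\in G,\,a\in \mathbb M_n(\Bbbk)\}$ of $\Cend_n=H\otimes H\otimes \mathbb M_n(\Bbbk)$; once the closed form of this product is in hand, part~1 is a coefficient comparison, and part~2 becomes a Skolem--Noether-based construction.

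For part~1, I would specialize formula \eqref{eqGprod-tmp} to the finite-group setting, using $\Delta(T_\beta)=\sum_{\gamma\delta=\beta}T_\gamma\otimes T_\delta$, $T_p(q)=\delta_{p,q}$, $L_xT_h=T_{x^{-1}h}$, and $T_\alpha T_\delta=\delta_{\alpha,\delta}T_\alpha$. The three Kronecker deltas collapse the Sweedler sum and force $z=g^{-1}$, $\gamma=g^{-1}$, $\delta=g\beta$, and then $g\beta=\alpha$, yielding
\[
(T_g\otimes T_\alpha\otimes a)\oo{z}(T_h\otimes T_\beta\otimes b)=\delta_{z,g^{-1}}\,\delta_{\beta,g^{-1}\alpha}\,T_{gh}\otimes T_\alpha\otimes ab.
\]
Since $\sigma$ is $H$-linear and $H$ acts on the first tensor factor, $\sigma$ preserves every operation $(\cdot\oo{z}\cdot)$ iff it matches these basis products. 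Substituting \eqref{eq:auto-s} into both sides of the identity above and comparing coefficients of $T_{gh}\otimes T_\alpha$ yields precisely the stated cocycle condition; bijectivity of the $\sigma_{g,\alpha}$ gives bijectivity of $\sigma$ on each $\Bbbk$-isotypic component.

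For part~2, my approach is to use the Skolem--Noether Theorem to realize each $\theta_\alpha$ as an inner automorphism $\theta_\alpha(a)=U_\alpha a U_\alpha^{-1}$ with $U_\alpha\in \mathrm{GL}_n(\Bbbk)$, and then try the ansatz $\sigma_{g,\alpha}(a):=U_\alpha a U_{g^{-1}\alpha}^{-1}$. This is plainly bijective, restricts to $\theta_\alpha$ when $g=e$, and the inner factor $U_{g^{-1}\alpha}^{-1}U_{g^{-1}\alpha}$ telescopes cleanly in
\[
a^{\sigma_{g,\alpha}}b^{\sigma_{h,g^{-1}\alpha}}=U_\alpha a U_{g^{-1}\alpha}^{-1}U_{g^{-1}\alpha}b U_{(gh)^{-1}\alpha}^{-1}=U_\alpha(ab)U_{(gh)^{-1}\alpha}^{-1}=(ab)^{\sigma_{gh,\alpha}},
\]
so part~1 promotes $\sigma$ to a conformal automorphism of $\Cend_n$.

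The only real obstacle is the Sweedler bookkeeping behind the displayed product formula in part~1; everything else is essentially mechanical. The ansatz in part~2 is easy to discover: specializing the cocycle to $h=e$ forces $\sigma_{g,\alpha}(b)=\sigma_{g,\alpha}(E)\,\theta_{g^{-1}\alpha}(b)$, and the left multiplier must intertwine $\theta_\alpha$ with $\theta_{g^{-1}\alpha}$, so by Skolem--Noether it is a scalar multiple of $U_\alpha U_{g^{-1}\alpha}^{-1}$; taking the scalar to be $1$ gives the simplest valid choice.
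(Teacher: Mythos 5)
Your proposal is correct and follows essentially the same route as the paper: part~1 by specializing \eqref{eqGprod-tmp} to the basis $T_g\otimes T_\alpha\otimes a$ and comparing coefficients, and part~2 by realizing each $\theta_\alpha$ as an inner automorphism and taking the telescoping ansatz, which coincides with the paper's $a^{\sigma_{g,\alpha}}=T_\alpha^{-1}aT_{g^{-1}\alpha}$ under the substitution $U_\alpha=T_\alpha^{-1}$. You merely carry out explicitly the verifications the paper leaves as ``easy to check.''
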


\begin{proof}
To prove first statement, it is enough to check the condition
$\sigma(x\oo{g} y) =\sigma(x)\oo{g}\sigma(y)$,
$x,y\in \Cend_n$, $g\in G$.

To prove the second one,
consider matrices
$T_\alpha \in \mathbb M_n(\Bbbk )$
such that
$T_\alpha^{-1}aT_\alpha = a^{\theta_\alpha }$,
and define
$a^{\sigma_{g,\alpha}} = a^{\theta_{\alpha }} E_{g,\alpha}$,
where $E_{g,\alpha } = T_\alpha^{-1}T_{g^{-1}\alpha }$,
$g,\alpha \in G$, $a\in \mathbb M_n(\Bbbk )$.
It is easy to check that the maps
$\sigma _{g,\alpha }\in \End \mathbb M_n(\Bbbk )$
constructed satisfy the conditions of statement 1.
\end{proof}

Therefore, we may suppose that for the conformal subalgebra
$C\subseteq \Cend_n $ we have
$S_e = \bigoplus_{k=1}^p A_k\otimes \mathbb M_n(\Bbbk )$,
where $A_k = \Bbbk (\sum_{g\in G_k} T_g )\subset H$.
For others $g\in G$ the structure of the space $S_g$
can be clarified by means of \eqref{eq:graded} and \eqref{eq:density}.
Namely,
\[
 S_g = \left\{\sum\limits_{k=1}^p \sum\limits_{\gamma \in G_k}
   T_\gamma \otimes a_k^{\sigma_{g,\gamma}}  \mid a_k\in \mathbb M_n(\Bbbk )\right \},
\]
where $\sigma _{g,\gamma }$ are some bijective linear transformations of
$\mathbb M_n(\Bbbk )$.
To be more precise, fix a system of representative
 $g_k\in G_k$ and assume $\sigma_{g,g_k}= \idd$,
$k=1,\dots, p$.

Consider elements of the form
\[
x=\sum\limits_{k=1}^{p} \sum\limits_{\alpha\in G_k}
T_\alpha \otimes E^{\sigma_{g,\alpha}}\in S_g,
\quad
y = \sum\limits_{\beta\in G} T_\beta\otimes b\in S_e,
\]
where $b\in \mathbb M_n(\Bbbk )$ is an arbitrary matrix.
Comparing the expressions
\[
(T_e\otimes y)\oo{e} (T_g\otimes x) = T_g\otimes z,
\quad
(T_g\otimes x)\oo{g^{-1}} (T_e\otimes y) = T_g\otimes z',
\]
we may conclude that
$\pi_{g_k}(z) = \pi_{g_k}(z') =b$
for all $k=1,\dots, p$.
It is easy to derive
\[
a^{\sigma_{g,\gamma }} = \chi(g,\gamma) a, \quad \chi: G\times G \to \Bbbk^*,
\]
where $\chi (g,g_k) =1$.
Using the relation \eqref{eq:graded}
it is not difficult to obtain the following relations for
$\chi $:
\[
\chi(g,\alpha )\chi(h,g^{-1}\alpha) = \chi(h,g^{-1}g_k)\chi(gh, \alpha),
\quad \alpha\in G_k, \ g,h\in G.
\]
Since $C=\sum\limits_{g\in G}T_g\otimes S_g$,
we obtain $C=C_{G_1,\chi }$.
\end{proof}


\begin{thebibliography}{22}

\bibitem{K1}
Kac V.~G.
Vertex algebras for beginners. Second edition,
Univ. Lecture Series {bf 10}, AMS, Providence, RI, 1998.

\bibitem{Bor}
Borcherds R.~E.
Vertex algebras, Kac-Moody algebras, and the Monster,
Proc. Nat. Acad. Sci. U.S.A. {\bf 83} (1986) 3068--3071.

\bibitem{FLM}
Frenkel~I.~B., Lepowsky~J., Meurman~A.
Vertex operator algebras and the Monster,
Pure and Applied Math {\bf 134},
Academic Press, Boston, MA, 1998.

\bibitem{K3}
Kac V.~G.
Formal distribution algebras and conformal algebras,
XIIth International Congress in Mathematical Physics (ICMP'97),
Internat. Press, Cambridge, MA, 1999, 80--97.

\bibitem{BKL}
 Boyallian C., Kac~V.~G., Liberati J.~I.
 On the classification of subalgebras of $\Cend_N$ and $\mathrm{gc}_N$,
 J.~Algebra {\bf 260} (2003) no.~1, 32--63.

\bibitem{Ko1}
Kolesnikov~P.~S.
Associative conformal algebras with finite faithful representation,
Adv. Math. {\bf 202} (2006) no.~2, 602--637.

\bibitem{Ko4}
Kolesnikov P.~S.
Associative algebras related to conformal algebras,
Appl. Categ. Structures, to appear.

\bibitem{BDK}
Bakalov~B., D'Andrea~A., Kac~V.~G.
Theory of finite pseudoalgebras,
Adv. Math. {\bf 162} (2001) no.~1, 1--140.

\bibitem{GK}
Ginzburg~V., Kapranov~M.
Kozul duality for operads,
Duke Math. J. {\bf 76} (1994) no.~1, 203--272.

\bibitem{BD}
Beilinson A.~A., Drinfeld V.~G.
Chiral algebras,
Amer. Math. Soc. Colloquium Publications {\bf 51},
AMS, Providence, RI, 2004.

\bibitem{GKK}
Golenishcheva-Kutuzova M.~I., Kac V.~G.
$\Gamma$-conformal algebras,
J. Math. Phys. {\bf 39} (1998) no.~4, 2290--2305.

\bibitem{Re1}
Retakh~A.
Associative conformal algebras of linear growth,
J. Algebra {\bf 237} (2001) no.~2, 769--788.

\bibitem{Jac}
Jacobson N.
Structure of rings,
American Mathematical Society Colloquium Publications {\bf 37},
AMS, Providence, RI, 1956.

\bibitem{Faith}
Faith C.
Algebra: Rings, Modules and Categories I,
Springer-Verl., Berlin-Heidelberg-New York, 1973.


\end{thebibliography}
\end{document}